\newcommand\NoBlackBoxes{\global\overfullrule0pt}
\let\serieslogo@\relax
\let\@setcopyright\relax
\newtheorem{definition}{Definition}[section]
\newtheorem{theorem}[definition]{Theorem}
\newtheorem{lemma}[definition]{Lemma}
\newtheorem{proposition}[definition]{Proposition}
\newtheorem{rem}[definition]{Remark}
\newtheorem{corollary}[definition]{Corollary}
\renewcommand{\P}{{\mathbb{P}}}
\newcommand{\E}{{\mathbb{E}}}
\newcommand{\R}{{\mathbb{R}}}
\renewcommand{\epsilon}{\varepsilon}
\renewcommand{\phi}{\varphi}
\newcommand{\vep}{\varepsilon}
\begin{document}

\setcounter{page}{1}

\title[A Comparative Study of Sparse Associative Memories]{A Comparative Study of Sparse Associative Memories}

\author[Vincent Gripon]{Vincent Gripon}
\address[Vincent Gripon]{Telecom Bretagne
 UMR CNRS Lab-STICC,
 Technopole Brest Iroise,
 29238 Brest,
France}

\email[Vincent Gripon]{vincent.gripon@telecom-bretagne.eu}

\author[Judith Heusel]{Judith Heusel}
\address[Judith Heusel]{Fachbereich Mathematik und Informatik,
University of M\"unster,
Einsteinstra\ss e 62,
48149 M\"unster,
Germany}

\email[Judith Heusel]{jheus01@uni-muenster.de}

\author[Matthias L\"owe]{Matthias L\"owe}
\address[Matthias L\"owe]{Fachbereich Mathematik und Informatik,
University of M\"unster,
Einsteinstra\ss e 62,
48149 M\"unster,
Germany}

\email[Matthias L\"owe]{maloewe@math.uni-muenster.de}

\author[Franck Vermet]{Franck Vermet}
\address[Franck Vermet]{Laboratoire de Math\'ematiques, UMR CNRS 6205, Universit\'e de Bretagne Occidentale,  6, avenue Victor Le Gorgeu\\
CS 93837\\
F-29238 BREST Cedex 3\\
France}

\email[Franck Vermet]{Franck.Vermet@univ-brest.fr}

%\thanks{Research of the second author was supported by ??}

\date{\today}

\subjclass[2000]{Primary: 82C32, 60K35, Secondary: 68T05, 92B20}

\keywords{Neural networks, associative memory, sparse patterns, storage capacity, exponential inequalities}

%\newcommand{\wlim}{\mathop{\hbox{\rm w-lim}}}
%%\newcommand{\sgn}{\mathop{\hbox{\rm sgn}}}
%\newcommand{\na}{{\mathbb N}}
%\newcommand{\re}{{\mathbb R}}
%
%\newcommand{\vep}{\varepsilon}
%

%\begin{document}
%temporarily
%\today
%\tableofcontents
%temporarily
\begin{abstract}
We study various models of associative memories with sparse information, i.e. a pattern to be stored is a random string of $0$s and $1$s with about $\log N$ $1$s, only. We compare different synaptic weights, architectures and retrieval mechanisms to shed light on the influence of the various parameters on the storage capacity.
\end{abstract}

\maketitle

\section{Introduction}
Starting with the seminal paper \cite{griponb}, Gripon, Berrou and coauthors revived the interest in associative memory models, see e.g. \cite{griponc}, \cite{gripond}, \cite{gripone}, \cite{griponf}. Their approach is motivated by both biological considerations and ideas from information theory and leads to a neural network that is organized in clusters of interacting neurons. They state that their model (which we will refer to as the GB model) is more efficient (see \cite{griponb}) and has by far a larger storage capacity than the benchmark model for associative memories, the Hopfield model introduced in \cite{Hopfield1982}. Indeed, their considerations lead to a storage capacity of the order $N^2/(\log N)^2$ messages (or patterns or images; these words will be used synonymously) for their model with $N$ neurons, while the standard Hopfield model with N neurons only has a capacity of $N/(2 \log N)$ (see \cite{MPRV}, \cite{Bov98}).

However, the standing assumption of the GB model is that for $N$ neurons there are $c$ clusters of neurons with  $1\le c \le \log N$, and each message to be stored has only exactly one active neuron per cluster. This not only leads to a restriction on the number of storable messages, but also to them being very sparse (where sparsity is defined by a small number of active neurons).
As a matter of fact, for sparse messages other models of associative memories have been proposed by Willshaw \cite{Willshaw}, Amari \cite{Amari1989}, Okada \cite{Okada1996},  or \cite{Bolle}, \cite{LV_BEG}, and \cite{HLV15}.
All these models have in common that their storage capacity is conjectured to be much larger than that of the Hopfield model. The Willshaw model has also been discussed in a number of papers by Palm, Sommer, and coauthors (\cite{Palm1980}, \cite{Palm1996}, \cite{Palm2013} e.g.), with the difference that there the focus is rather on information capacity than on exact retrieval (and that many of the techniques are not rigorous).
In \cite{LV_BEG} it has been rigorously proven for a sparse three-state network, the so called Blume-Emery-Griffiths model, that the capacity is indeed of the predicted order (even though there, strictly speaking the degree of sparsity is not allowed to depend on the number of neurons).

A natural question is thus to separate the various factors that can influence the storage capacity of a model: the sparseness of the messages, the storage mechanism, and the algorithm to retrieve the stored patterns. The objective of the present article is to analyze this question. To this end we will try to give bounds on the storage capacity of the Willshaw model, Amari's version of a sparse 0-1 Hopfield model, and the GB model. In particular, we will see that all these models achieve a storage capacity of the order of $N^2/(\log N)^2$ when the number of active neurons $c$ satisfies $c = a \log N$ for some positive $a$. Also we will discuss the influence of model specificities to the absolute constants in the storage capacities.

More precisely, we organize our article in the following way. In the next section, we describe the three models we aim at studying and formally define what is meant by ``storing a message''.
In Section 3 we give some insight why an order of $N^2/(\log N)^2$ for the number of stored messages is to be expected in a model with $N$ neurons, of which about only $\log N$ are active. To this end we consider a certain event in the GB model that implies that a message cannot be retrieved correctly. In the fourth section we state our main results. These are proved in Section 5. Section 6 takes up ideas from Section 3 to show, that if the number of messages is too large, an erased message cannot be completed correctly in the GB model. Finally, Section 7 discusses some dynamical properties of the considered models and contains some simulations, in particular on the probability to correct an error in several steps. These probabilities are notoriously difficult to access analytically (see e.g. \cite{Burshtein}, \cite{LV07}, or \cite{LV15}).
The simulations give an impression of the advantages and drawbacks of the several models.

{\bf Acknowledgement:} We are very grateful to two anonymous referees for a very careful reading of a first version of the manuscript and valuable remarks that helped to improve its readability significantly.

\section{The models}
We will now present the models that are in the center of our interest in the present paper. The reference model is always the Hopfield model on the complete graph (i.e. all neurons are interconnected), with $M$ patterns $(\xi^\mu)_{\mu=1, \ldots M}=(\xi_i^\mu)_{i=1, \ldots N}^{\mu=1, \ldots M}\in\{ -1, +1\}^{N\times M}$. Here the so called synaptic efficacy $J_{ij}$ is given by
$$
J_{ij}=\sum_\mu  \xi_i^\mu\xi_j^\mu \qquad 1 \le i\neq j \le N
$$
and an input $\sigma \in \{ -1, +1\}^N$ is transformed by the dynamics
$$
T_i(\sigma)= \mathrm{sgn}(\sum_{j \neq i} J_{ij}\sigma_j)
$$
where $\mathrm{sgn}$ is the sign function (and the sign of 0 is chosen at random). This update can happen either synchronously or asynchronously in $i$. In \cite{MPRV} it was shown that for unbiased and i.i.d. random variables $((\xi_i^\mu)_{i=1, \ldots N})_{\mu=1, \ldots M}$ and  $M= c \frac N{\log N}$ with $c < \frac 12 $, an arbitrary message is stable under the dynamics with a probability converging to one. Of course, this model can be generalized to i.i.d. biased patterns with expectation $a$. In \cite{Lo_biased} the author suggests to replace the synaptic efficacy by $J_{ij}=\sum_\mu  (\xi_i^\mu-a)(\xi_j^\mu-a)$ and shows that the storage capacity (in the sense that an arbitrary pattern is a fixed point of the above dynamics) decreases for a strong bias. More precisely, he gives a lower bound on the storage capacity of the Hopfield model with biased patterns of the form $C p^2(1-p)^2 N/\log N$, where $C$ is an explicit constant that depends on the notion of storage capacity used and $p$ is the probability that $\xi_1^1$ equals $+1$. Note that this behaviour is amazingly similar to the behaviour of Hopfield models with correlated patterns, cf. \cite{L98}. Another model for biased $\pm 1$-patterns was proposed by Okada \cite{Okada1996}.

However, if we think of the bias as a certain sparsity of the patterns, it may be more natural to consider patterns $(\xi^\mu)_{\mu=1, \ldots M}$ where the $(\xi_i^\mu)$ still are i.i.d. but take values $0$ and $1$ where $\P(\xi_i^\mu=1)=p$ is small. We will henceforth consider such patterns and three such models.

\subsection{Amari's model}
The model Amari proposed in \cite{Amari1989} is closest in spirit to the Hopfield model. Here we take $J_{ij}=\sum_\mu \xi_i^\mu\xi_j^\mu$ and with this new setting, we consider input spin configurations $\sigma \in \{0,1\}^N$ and map their spins to either $0$ or $1$ with the help of a dynamics. Of course, one should only map an input spin $\sigma_i$ to 1, if the so called local field $\sum_{j \neq i} J_{ij}\sigma_j$ is large enough, say larger than a given threshold.
To compare Amari's results to the other models we choose
$$
\P(\xi_i^\mu=1)=p = \frac{\log N}N.
$$
As a matter of fact, this is the case of extremely diluted patterns, since if $p$ is even smaller, say $p=c/N$ for some $c$, with positive probability some of the patterns will entirely consist of $0$'s and will thus be indistinguishable.

We propose the following dynamics, where a spin $\sigma_i$ will be 1, if the so called local field
$$
S_i(\sigma)=\sum_{j \neq i} J_{ij}\sigma_j
$$
is large enough, say larger than a given threshold.
$$
T_i(\sigma)= \Theta(S_i(\sigma)-h)
$$
where $\Theta(x)= {\bf 1}_{\{x \ge 0\}}$ and we choose $h= \gamma \log N$ for some $\gamma >0$. Note that this seems a reasonable choice if we want the $(\xi^\mu)$ to be fixed points of the dynamics. Consider for example the case $\xi_i^1=1$ we have that
$$\sum_{j \neq i} J_{ij}\xi^1_j= \sum_{j\neq i} \xi_j^1+ \sum_{\mu \neq 1} \sum_{j\neq i} \xi_i^\mu \xi_j^\mu \xi_j^1$$
and the first term on the right hand side is of order $\log N$. Also note that Amari just considers the case of a fixed number $\log N$ of active neurons per message (which is similar), and states that the above model would perform much worse in the case we consider. We will see that this is not the case.

\subsection{The Willshaw model}
The following model was proposed in a celebrated paper by Willshaw \cite{Willshaw}. It corresponds to Amari's model with the restriction that the efficacy $J_{ij}$ does not depend on the {\sl number} of messages that use neurons $i$ and $j$ but just on whether there is any $\mu$ with $\xi_i^\mu \xi_j^\mu=1$. In the case of the Hopfield model this procedure is known as ``clipped'' synapses.

Formally, we will now either assume that the $(\xi_i^\mu)$ are i.i.d $0-1$ random variables with success probability $p=\frac{\log N}N$ or we take the $M$ messages to be realized uniformly at random from all sets of $M$ messages with exactly $c=\log N$ active neurons. Both cases are similar, but the first one is mathematically more convenient, because in this case the images as well as all their spins are independent. Moreover, in the Willshaw model we choose
$$
J_{ij}=\Theta(\sum_\mu  \xi_i^\mu\xi_j^\mu-1)=\begin{cases}
1 & \mbox{if } \exists \mu: \xi_i^\mu \xi_j^\mu=1 \\
0 & \mbox{otherwise,}
\end{cases}
$$
for all $i,j\in\{1,\ldots,N\}$.
There are two different (yet similar) types of dynamics to be considered. The first one is the threshold dynamics also considered in Amari's model. So again for an input $\sigma \in \{0,1\}^N$ we set
$$
T_i(\sigma)= \Theta(\bar{S}_i(\sigma)-h)
$$
with $\bar{S}_i(\sigma)=\sum_{j} J_{ij}\sigma_j$ and $h= \gamma {\log N}$, for some $\gamma >0$. This dynamics is applicable to both types of patterns (i.i.d. random variables $(\xi_i^\mu)$ or randomly chosen messages amongst all sets of $M$ messages with exactly $c$ active neurons). For the Willshaw model, we consider $\bar{S}_i$ instead of ${S}_i(\sigma)=\sum_{j\ne i} J_{ij}\sigma_j$, since simulations support that it improves performance to modify $S_i$ in order to account for self influence of neurons. This modification is well known and will be referred as ``memory effect''.

In the latter case of exactly $c$ active neurons per message and the messages being randomly chosen messages amongst all sets of $M$ messages with exactly $c$ active neurons there is another retrieval dynamics that requires the knowledge of all the $\bar{S}_i(\sigma)$ for $1 \le i \le N$. In this setting, for a given input $\sigma \in \{0,1\}^N$ we compute all the $\bar{S}_i(\sigma)$ and order them: they will be denoted by $h_{(1)} \ge h_{(2)} \ge \ldots \ge h_{(c)} \ge \ldots \ge h_{(N)}$. Then we set all neurons $i$ with $\bar{S}_i(\sigma) \ge h_{(c)}$ to 1 and the others to 0. Note that in case of a tie we may obtain more than $c$ 1's after a step of the dynamics. This procedure was called ``Winner takes all''-algorithm (WTA algorithm, for short) in \cite{gripong} in a model that is closely related to the following cluster model.

Similarly, we may as well imagine that $c$ is fixed but we do not know it.  In this case we could just take the most active neurons, i.e. set all neurons with a value $\bar{S}_i(\sigma)$ lower than $h_{(1)}$ to 0. Interestingly, if we consider as input a partially erased version  $\tilde\xi^\mu$ of a stored message $\xi^\mu$, for the one step retrieval we consider theoretically in Sections 4 and 5, this does not change anything as long as we consider the memory effect described above, since in this case $h_{(1)}=h_{(c)}$. This is because $h_{(1)}$ cannot be larger than the number of initial 1's in the dynamics input and this upper bound is reached for at least all the neurons that are active in the message $\xi^\mu$ we are looking for. Considering the performance of the model with several steps of the retrieval dynamics numerically, however, shows that the above threshold $h_{(c)}$ is superior to a threshold $h_{(1)}$.
As a matter of fact, the dynamics using $h_{(1)}$ as threshold does not benefit from performing more than one iteration (see Theorem~\ref{thm:stuck1iteration}). %% Indeed, either the correct message is retrieved, or at least one spurious neuron is set to 1. In the latter case, either convergence is reached (this is when the spurious neurons are connected to all the correct ones) or the next step in the dynamics leads back to the input case (the only neurons connected to all neurons set to 1 are the initial ones).

On the other hand using $h_{(c)}$ allows for improvement over the time.
Also note that the WTA algorithm with $h_{(1)}$ as threshold can be applied in the case where the $(\xi_i^\mu)$ are i.i.d $0-1$ random variables with success probability $p=\frac{\log N}N$, as will be proven in Section 4.

\subsection{The GB model}
Here we assume that $N= l \log l =: l \cdot c$ for some $l$.
One tries to store $M$ messages $\xi^1,\ldots,\xi^M$ in a network with a block structure. The messages are sparse in the sense that each message $\xi^\mu$ has $c$ active neurons, only, one in each block of $l$ neurons.
To take into account the block structure, we will denote by $(a,k)$ the $k$-th neuron of the $a$-th block.

%I.e.$\xi^\mu=(\xi_1^\mu,\ldots,\xi_c^\mu)$, and for each $\mu=1, \ldots M$ and each $i=1, \ldots c$, $\xi_i^\mu$ denotes the (only) active neuron of the message $\xi_\mu$ in the i'th block.
%With such a message $\xi^\mu$ one associates the edges of the complete graph $\xi^\mu$ spanned by the vertices $\xi_1^\mu,\ldots,\xi_c^\mu$.
%A message $\xi^0=(\xi_1^0,\ldots,\xi_c^0)$ is considered to be stored in the model if all edges of the complete graph spanned by $(\xi_1^0,\ldots,\xi_c^0)$ are present in the set of edges $$\mathcal{E}:=\{e: e \mbox{ is an edge of one of the } m^\mu\}.$$

%The GB model can be mathematically described as follows:
%Set $\mathcal{A}= \{1, \ldots, l\}$. A message $\xi^\mu$ is then a string $\xi^\mu=(\xi_1^\mu,\ldots,\xi_c^\mu)\in \mathcal{A}^c$.
%With a message $\xi^\mu$  we associate a (column) vector $\psi(\xi^\mu)\in (\{0,1\}^l)^c$ obtained by replacing the $\xi_i^\mu$ with the unit vector $e_{\xi_i^\mu}$.
%Abusing notation we will also write $\mathcal{A}^c$ for the set $(\{0,1\}^l)^c$.
%Now build the 0-1-matrix
%$$ W = \max_{\xi \in \mathcal{M}} \psi(\xi) \psi(\xi)^t $$
%where $\mathcal{M}=\{\xi^1,\ldots,\xi^M\}$ and $\psi(\xi)^t$ is the transpose of $\psi(\xi)$.
For $a\ne a'$, an edge $e=((a,k),(a',k'))$ is said to be active for the message $\xi^\mu$ if
$\xi^\mu_{(a,k)} \xi^\mu_{(a',k')}=1$.
%$ W_{(a,k),(a,k')} = 1$.
%\item and $\tilde W_{(a,k),(a,k)} = 1$ if and only if there exists $\mu$ such that the k'th neuron in block $a$ is 1 (this is equivalent
%to adding a self-loop to the graph for each vertex $(a,k)$ such that there exists
%$\mu$ with $m^\mu_a= k$.
%\end{itemize}
Let $$\mathcal{E}((\xi^\mu)_{\mu=1,\ldots,M}):=\{e: e \mbox{ is an active edge of one of the } \xi^\mu\}.$$

\textit{We can also define the graph associated with or spanned by an arbitrary message $\xi^0$. This will be the (necessarily complete) graph with all vertices $(a,k)$ such that $\xi^0_{(a,k)}=1$ and edges $e=((a,k),(a',k'))$ for all $a,a',k,k', a\ne a'$ such that $\xi^0_{(a,k)} \xi^0_{(a',k')}=1$. Then a message $\xi^0$ is considered to be stored in the model if all edges of this complete graph spanned by $\xi^0$ are present in the set of edges $\mathcal{E}((\xi^\mu)_{\mu=1,\ldots,M})$.}

Similar to the Willshaw model, we define the synaptic efficacy by
$$W_{(a,k),(a',k')}= \Theta(\sum_{\mu=1}^M  \xi_{(a,k)}^\mu\xi_{(a',k')}^\mu-1).$$
Thus for $a\not=a'$ $ W_{(a,k),(a',k')} = 1$ if and only if $(a,k)$ and $(a',k')$ are activated simultaneously in one of the messages (both in the same message). On the other hand, for $a=a'$ we have $ W_{(a,k),(a,k')} = 1$ if and only if $k=k'$ and there exists $\mu$ such that the $k$'th neuron in block $a$ is 1. As a matter of fact, this description shows that the major difference to the Willshaw model is that in the GB model one has a restriction of the location of the 1's.

With this synaptic efficacy one can associate a dynamics $T$ on $(\{0,1\}^l)^c$ : instead of the local field $S_i(\sigma)$ of the preceding models, we define
$$
S_{(a,k)}(\sigma)=\sum_{b=1}^c \sum_{r=1}^l  \Theta(W_{(a,k),(b,r)}\sigma_{(b,r)}-1)
$$
for $\sigma\in (\{0,1\}^l)^c$, and the dynamics
$$
T_{(a,k)}(\sigma) =  \Theta(S_{(a,k)}(\sigma)- h).
$$
Here again $h$ is a threshold that needs to be adapted to the tasks we want the network to perform. E.g., choosing $h=c$ one readily verifies that all stored messages $\xi\in {\mathcal M}=\{\xi^1,\ldots,\xi^M\}$ are stable, i.e. we have $T(\xi)= \xi.$ Obviously, this can only go to the expense of error tolerance of the network.

The dynamics described above is the equivalent of the threshold dynamics in the Willshaw model. As in the latter model, we can also define a WTA algorithm. This will respect the local nature of the GB model.
To describe it, assume we want to update the values of the neurons in the $a$'th cluster $\sigma_{(a,k)}, k=1, \ldots,  l$. For each $k=1, \ldots,  l$ we then build
\begin{equation}
  s_{(a,k)}(\sigma)= \sum_{b=1}^c \Theta(\sum_{r=1}^l W_{(a,k), (b,r)} \sigma_{(b,r)} -1).
  \label{eq:som}
\end{equation}
(This is called the SUM-OF-MAX rule in \cite{griponh}; it accounts for the fact that in each message there only can be one active connection between two clusters). We then order the $s(a,k), k=1,\ldots,l$ and set the neuron(s) with the largest value to 1 and all others to 0.

\section{Wrong messages and a first bound on the storage capacity}

In this section we will approach the question: what could be the right order for the storage capacity of the above networks?

At first glance, storage capacity may refer to different properties of the network. E.g. from Section 4 we will ask ourselves: how many messages can we store such that they are fixed points of the network dynamics or how many messages can we register in our network such that even a certain number of errors can be corrected? On the other hand, in the previous section we already learned that in the GB model with a threshold dynamics,  an arbitrary number of input messages is stable if we choose the threshold equal to $c$, the number of active neurons. It is intuitively clear that this can only have a negative effect on the error retrieval abilities of the network, if we store too many messages in the network.

An extreme case of such a lack of error tolerance is if we recognize an input as a stored message even if it is not.
This property will be discussed in greater detail for the GB model and partially for the Willshaw model in this section. The insight we gain will provide us with an idea of how many messages we can store in the models.

We will prove the following theorem.

\begin{theorem} \label{Thsection3}
Consider the GB model with the threshold retrieval dynamics and threshold $h=c$. Take
$$
M=\alpha (\log c) l^2= \alpha l^2 \log \log l.
$$

If $\alpha>2$,  a random message (independent of the stored patterns) will be recognized as a stored message with probability converging to 1 as $l\rightarrow \infty$.

If $\alpha=2$ and as $l\rightarrow \infty$, with strictly positive probability a random message  will be recognized as a stored message.

On the other hand, if $\alpha < 2$ the probability that a random message will be recognized as stored goes to zero as $l\rightarrow \infty$.

\end{theorem}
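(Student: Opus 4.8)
Fix a random message $\xi^0$ with $c$ active neurons, one per block, independent of the stored patterns $\xi^1,\ldots,\xi^M$. Since the threshold dynamics with $h=c$ keeps a configuration fixed iff every active neuron $(a,k)$ of $\xi^0$ has $S_{(a,k)}(\xi^0)\ge c$ and every inactive neuron has $S_{(a,k)}(\xi^0)<c$ — but actually ``recognized as stored'' should be read via the graph-theoretic description given in the excerpt: $\xi^0$ is recognized precisely when all $\binom{c}{2}$ edges of the complete graph it spans are present in $\mathcal{E}((\xi^\mu)_{\mu\le M})$. So the event of interest is
$$
A := \bigcap_{\{a,a'\}} \bigl\{ \text{the edge } ((a,k_a),(a',k_{a'})) \text{ is active for some } \mu \le M \bigr\},
$$
where $(a,k_a)$ denotes the active neuron of $\xi^0$ in block $a$. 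The first step is to compute, for a single fixed edge $e$, the probability $q$ that $e\notin\mathcal{E}$. Under the i.i.d.\ model each stored pattern activates $e$ with probability $1/l^2$ (independently choosing one of $l$ neurons in each of the two blocks), so $q = (1-1/l^2)^M$. With $M=\alpha l^2\log\log l = \alpha l^2\log c$ this gives $q \approx \exp(-\alpha\log c) = c^{-\alpha}$.

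**Second step: a second-moment / Poisson-approximation argument.** Let $X$ be the number of ``missing'' edges among the $\binom{c}{2}$ edges spanned by $\xi^0$, i.e. $X=\sum_e \mathbf{1}_{e\notin\mathcal E}$. Then $\P(A) = \P(X=0)$. We have $\E[X] = \binom{c}{2} q \sim \tfrac12 c^2 c^{-\alpha} = \tfrac12 c^{2-\alpha}$. If $\alpha>2$ this tends to $0$, so by Markov $\P(X\ge 1)\to 0$, giving $\P(A)\to 1$ — the first assertion. If $\alpha<2$ then $\E[X]\to\infty$; to conclude $\P(X=0)\to 0$ we need a lower bound on $\mathrm{Var}(X)$ or, more cleanly, a second-moment bound $\P(X=0)\le \mathrm{Var}(X)/\E[X]^2$, so it suffices to show $\E[X^2]/\E[X]^2\to 1$. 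The events $\{e\notin\mathcal E\}$ for distinct edges $e,e'$ are positively correlated but only mildly so: $\P(e\notin\mathcal E, e'\notin\mathcal E) = (1-2/l^2 + \mathbf{1}_{e,e'\text{ share a block}}\cdot(\text{correction}))^M$. When $e$ and $e'$ share no block the two are activated by disjoint coordinate choices and $\P(\text{both active for }\mu)$ would be $1/l^4$ by inclusion–exclusion one gets $\P(e,e'\notin\mathcal E)=(1-2/l^2+1/l^4)^M = q^2$ exactly — independence. When they share a block, a short computation gives $\P(e,e'\notin\mathcal E) = (1-2/l^2+1/l^3)^M \le q^2 \exp(O(M/l^3)) = q^2(1+o(1))$ since $M/l^3 = \alpha\log\log l/l \to 0$. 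Summing, $\E[X^2] = \sum_{e,e'}\P(e,e'\notin\mathcal E) \le \E[X]^2 + (\text{pairs sharing a block})\cdot q^2(1+o(1)) + \E[X]$; the number of pairs sharing a block is $O(c^3)$, so that contribution is $O(c^3 c^{-2\alpha}) = o(\E[X]^2)$ precisely when $c^{3-2\alpha} = o(c^{4-2\alpha})$, which always holds, and we also need it $o(1)$ relative to... in fact we need $\mathrm{Var}(X)/\E[X]^2\to 0$, i.e. $(\E[X]^2(1+o(1)) + O(c^{3-2\alpha}) + \E[X] - \E[X]^2)/\E[X]^2 \to 0$, which holds as long as $\E[X]\to\infty$, i.e. $\alpha<2$. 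This yields $\P(A)\to 0$.

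**Third step: the boundary case $\alpha=2$.** Here $\E[X]\to \tfrac12$, a finite positive constant, and the same covariance estimates show the pair correlations are asymptotically negligible, so $X$ converges in distribution to a Poisson random variable with mean $\lambda=\tfrac12$ (this is the standard Chen–Stein setup: bounded mean, vanishing ``$b_1$'' and ``$b_2$'' terms because overlaps contribute $O(c^{-1})$, and the ``$b_3$'' term vanishes by the exact independence of non-overlapping edges). Hence $\P(A) = \P(X=0) \to e^{-1/2} \in (0,1)$, giving the ``strictly positive probability'' claim. One should double-check the normalization of $\lambda$: with $q\sim c^{-2}$ and $\binom c2 \sim c^2/2$ one gets $\lambda = 1/2$, but the precise constant is irrelevant to the statement, only its finiteness and positivity.

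**Main obstacle.** The delicate point is the covariance control in steps two and three: one must verify that edges sharing a common block are only negligibly correlated, which hinges on the estimate $M/l^3 \to 0$ under $M = \Theta(l^2\log\log l)$, and on getting the inclusion–exclusion for ``$\mu$ activates $e$ and $e'$'' exactly right (the shared-block case gives probability $1/l^3$ for the union's complement correction, not $1/l^4$). If instead one uses the model with exactly $c$ active neurons chosen uniformly, the $\xi^\mu$ are still independent across $\mu$ but the block-choices within a $\mu$ are independent too (one uniform choice per block), so the computation is essentially identical; the only care needed is that $\xi^0$ itself has exactly one active neuron per block, which is built into the setup. I would present the i.i.d.\ computation in full and remark that the fixed-$c$ case is the same. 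A secondary, purely bookkeeping obstacle is matching the ``recognized as a stored message'' event with the graph-covering event $A$ — this is exactly the italicized definition in Section 2, so it can be invoked directly.
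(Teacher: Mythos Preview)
Your proposal is correct. The overall strategy---reduce to the event $A=\{X=0\}$ where $X$ counts missing edges, then use first moment, second moment, and Chen--Stein for the three regimes---works, and the key covariance estimate $(1-2/l^2+1/l^3)^M = q^2(1+o(1))$ for edges sharing a block is exactly the computation the paper also needs.

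The route differs from the paper's, however. The paper works with the complementary indicators $X_e = \mathbf{1}\{e\in\mathcal E\}$ and exploits \emph{positive association}: the lower bound $\P(A)\ge d^{\binom{c}{2}}$ with $d=1-q$ follows directly from the FKG-type product inequality for increasing functions of independent variables, and the matching upper bound comes from the Boutsikas--Koutras inequality $\P(\bigcap\{Y_e=0\})\le \prod\P(Y_e=0)+\sum_{e<e'}\mathrm{Cov}(X_e,X_{e'})$, after which the same covariance calculation you do finishes the job. Your approach replaces the positive-association machinery with the standard first/second-moment method, which is more elementary and avoids citing an external lemma; the price is that your write-up of the variance bound is slightly sloppy (the inequality ``$\E[X^2]\le\E[X]^2+\ldots$'' should be tidied into $\mathrm{Var}(X)\le \E[X] + o(c^3q^2)$). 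For $\alpha=2$ the paper only invokes the positive-association lower bound to get $\liminf\P(A)\ge e^{-1/2}$, whereas you reach for Chen--Stein; note that your own Markov step already gives $\P(X=0)\ge 1-\E[X]\to 1/2>0$, which is all the theorem asks, so Chen--Stein is overkill here (though it does pin down the limit $e^{-1/2}$, matching the paper's lower bound). Your identification of the ``recognized as stored'' event with $A$ via the italicized definition in Section~2 is correct.
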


We will use positive association of random variables (see e.g. \cite{Esary/Proschan/Malkup:1967}) to prove this theorem.
Recall that a set of real valued random variables $\mathbf{X}= (X_1, X_2, \ldots, X_n)$ is positively associated, if for any non-decreasing functions $f$ and $g$ from $\R^n$ to $\R$ for which the corresponding expectations exist we have
$$
\mathrm{Cov}(f(\mathbf{X}),g(\mathbf{X}))\ge 0.
$$
Also recall that independent random variables are positively associated and that non-decreasing functions of positively associated random variables remain positively associated.

For positively associated random variables we will repeatedly apply the following inequality.

\begin{lemma}(see \cite{Boutsikas2000}, Theorem 1)
Let $X_1, X_2, \ldots, X_n$ be positively associated integer valued random variables. Then
$$
0 \le  \P[X_i=0, i=1,\ldots,n]- \prod_{i=1}^n  \P[X_i=0] \le \sum_{1 \le i < j \le n} \mathrm{Cov}(X_i, X_j).
$$
\end{lemma}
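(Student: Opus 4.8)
The plan is to express everything through the indicators $Y_i := \ind\{X_i = 0\}$. Since in the applications below the $X_i$ are non-negative counting variables, we have $\{X_i=0\}=\{X_i\le 0\}$, so each $Y_i$ is a \emph{non-increasing} function of $X_i$, hence of the whole vector $\mathbf{X}=(X_1,\ldots,X_n)$, and moreover $\P[X_i=0,\ i=1,\ldots,n]=\E[\prod_{i=1}^n Y_i]$ while $\prod_i\P[X_i=0]=\prod_i\E[Y_i]$. I will use two consequences of positive association throughout: first, applying the definition to the non-decreasing functions $-f,-g$ shows $\mathrm{Cov}(f(\mathbf{X}),g(\mathbf{X}))\ge 0$ whenever $f,g$ are \emph{both} non-increasing; second, since a product of non-negative non-increasing functions is non-increasing, every partial product $\prod_{i\in A}Y_i$ is a non-increasing function of $\mathbf{X}$.

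The backbone is a telescoping interpolation between the joint law and the product law. Setting $A_k:=\E[\prod_{i=1}^k Y_i]\prod_{i=k+1}^n\E[Y_i]$, so that $A_n=\E[\prod Y_i]$ and $A_0=\prod\E[Y_i]$, a direct computation of $A_k-A_{k-1}$ gives
$$\E\Big[\prod_{i=1}^n Y_i\Big]-\prod_{i=1}^n\E[Y_i]=\sum_{k=1}^n\Big(\prod_{i=k+1}^n\E[Y_i]\Big)\,\mathrm{Cov}\Big(\prod_{i=1}^{k-1}Y_i,\ Y_k\Big).$$
Each covariance here pairs two non-increasing functions of $\mathbf{X}$, hence is $\ge 0$; as the prefactors lie in $[0,1]$, every summand is non-negative, which already delivers the lower bound $\P[X_i=0,\forall i]\ge\prod_i\P[X_i=0]$. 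Dropping the prefactors $\prod_{i=k+1}^n\E[Y_i]\le 1$ leaves the upper estimate $\sum_{k=1}^n\mathrm{Cov}(\prod_{i=1}^{k-1}Y_i,Y_k)$.

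It then remains to dominate each term $\mathrm{Cov}(\prod_{i=1}^{k-1}Y_i,Y_k)$ by the pairwise sum $\sum_{j=1}^{k-1}\mathrm{Cov}(Y_j,Y_k)$; summing over $k$ produces exactly $\sum_{1\le j<k\le n}\mathrm{Cov}(Y_j,Y_k)$. This subadditivity I would prove by induction on the number of factors, the engine being the three-function inequality $\mathrm{Cov}(W,UV)\le\mathrm{Cov}(W,U)+\mathrm{Cov}(W,V)$ for non-increasing $[0,1]$-valued $U,V,W$. Indeed, the slack equals $\mathrm{Cov}(W,\,U+V-UV)=\mathrm{Cov}(W,\,1-(1-U)(1-V))$, and since $1-U,1-V$ are non-decreasing and in $[0,1]$, the function $1-(1-U)(1-V)$ is again non-increasing, so this covariance is $\ge 0$ by positive association. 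Taking $W=Y_k$, $U=\prod_{i=1}^{m-1}Y_i$, $V=Y_m$ then drives the induction.

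Finally I would pass from the indicator covariances to the covariances of the $X_i$ by Hoeffding's identity. For non-negative integer valued $X$ one has $X=\sum_{s\ge 0}\ind\{X>s\}$, so for any pair
$$\mathrm{Cov}(X_j,X_k)=\sum_{s\ge 0}\sum_{t\ge 0}\mathrm{Cov}\big(\ind\{X_j>s\},\ \ind\{X_k>t\}\big),$$
and every term is $\ge 0$ because the maps $\ind\{\cdot>s\}$ are non-decreasing. The $(s,t)=(0,0)$ term is $\mathrm{Cov}(\ind\{X_j\ge 1\},\ind\{X_k\ge 1\})=\mathrm{Cov}(1-Y_j,1-Y_k)=\mathrm{Cov}(Y_j,Y_k)$, whence $\mathrm{Cov}(Y_j,Y_k)\le\mathrm{Cov}(X_j,X_k)$. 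Combining this with the previous step yields the upper bound $\sum_{j<k}\mathrm{Cov}(X_j,X_k)$ and finishes the proof. The step I expect to be the main obstacle is the subadditivity lemma: the whole argument hinges on keeping every function non-increasing, since only then does $1-(1-U)(1-V)$ stay non-increasing and the three-function inequality — and with it the telescoping — go through.
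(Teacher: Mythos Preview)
The paper does not prove this lemma at all; it is quoted verbatim from \cite{Boutsikas2000}, Theorem~1, and then simply applied. So there is no ``paper's own proof'' to compare against.

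Your argument is correct and self-contained, with one caveat you already flag: you need the $X_i$ to be \emph{non-negative} integers so that $\{X_i=0\}=\{X_i\le 0\}$ is a decreasing event and Hoeffding's discrete identity $X=\sum_{s\ge 0}\ind\{X>s\}$ applies. The lemma as stated in the paper omits this hypothesis, but every application in the paper (the $X_e$ and $Y_e$ are $\{0,1\}$-valued) satisfies it, so your restriction is harmless here. The telescoping interpolation, the Newman--Wright type subadditivity $\mathrm{Cov}(W,UV)\le\mathrm{Cov}(W,U)+\mathrm{Cov}(W,V)$ via $U+V-UV=1-(1-U)(1-V)$, and the final passage $\mathrm{Cov}(Y_j,Y_k)\le\mathrm{Cov}(X_j,X_k)$ through Hoeffding's identity are all sound; this is essentially the route taken in the Boutsikas--Koutras paper itself.
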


\begin{proof}[Proof of Theorem \ref{Thsection3}]
Let $\xi^0$ be a random message. Without loss of generality we may (after relabelling) assume that $\xi^0_{(a,1)}=1$, for all $a=1,\ldots, c$.
Let ${\mathcal G}(\xi^0)$ be the event that $\xi^0$ is stored in the GB model. Its probability $\P({\mathcal G}(\xi^0))$ is given by
$$
\P({\mathcal G}(\xi^0))= \P(\forall a,b \in \{1, \ldots, c\},a \neq b, \exists \mu \in \{1, \ldots, M\}: \xi_{(a,1)}^\mu  \xi_{(b,1)}^\mu=1).
$$
 Note that the latter can be rewritten as
$$\P(\forall a,b \in \{1, \ldots, c\},a \neq b : \max_\mu \xi_{(a,1)}^\mu  \xi_{(b,1)}^\mu=1).$$
Now the $(\xi_{(a,1)}^\mu)$ are independent $0-1$-valued random variables, and taking their product and the maximum of these products are increasing functions of them. Thus $\{ \max_\mu \xi_{(a,1)}^\mu  \xi_{(b,1)}^\mu, a \neq b \}$ are positively associated (see e.g. \cite{Esary/Proschan/Malkup:1967}), which implies
\begin{eqnarray*}
\P(\forall a,b \in \{1, \ldots, c\},a \neq b: \max_\mu \xi_{(a,1)}^\mu  \xi_{(b,1)}^\mu=1) &\ge&  \P(\max_\mu \xi_{(a,1)}^\mu  \xi_{(b,1)}^\mu=1)^{c (c-1)/2}    \\
&=& (1- (1- 1/l^2)^M)^{c (c-1)/2}
\end{eqnarray*}
where on the right hand side of the above inequality $a$ and $b$ is an arbitrary pair of distinct variables.

Choosing $M=\alpha \log c l^2$ we see that the right hand side is approximately given by
$$
(1- (1- 1/l^2)^M)^{c (c-1)/2}   \approx \exp\left(-\frac{c^2}{2} e^{-\alpha \log c }\right)
$$
which converges to 1  if $\alpha >2$, and to $e^{-1/2}$ if $\alpha =2$.

On the other hand, we can also use positive association for an upper bound.
We put $X_e= \max\{ \xi_{(a,1)}^\mu  \xi_{(b,1)}^\mu,  \mu=1,\ldots,M\}$ for $e=((a,1),(b,1))$ and
$$
Z=\sum_{e\in V} X_e \quad \mbox{with  } V=\{((a,1),(b,1)),  a, b \in \{1, \ldots, c\}, a \neq b \}.
$$
Trivially,
$$
\P[{\mathcal G}(\xi^0)] = \P[Z=c (c-1)/2].$$
On the other hand,  the random variables $Y_e=1-X_e$ are also positively associated integer valued, and we may use the above lemma to arrive at
$$ \P[Z=L] \le \prod_e \P[Y_e=0]   +   \sum_{e,e'\in V} \mbox{Cov}(Y_e, Y_{e'})$$
i.e.
\begin{eqnarray} \label{exp_ineq}
 \P[Z=L]& \le&d^L   +   \sum_{e,e'\in V} \mbox{Cov}(X_e, X_{e'})
 \end{eqnarray}
where we set $d:=(1- (1- 1/l^2)^M)$ and we are left with computing the covariances.
To this end notice that $\mbox{Cov}(X_e, X_{e'})=0$, if $e$ and $e'$ are disjoint. So assume that $e=((a,1),(b,1))$ and $e'=((a,1),(b',1))$ and put
$\mathcal{M}(a,1):= \{\mu: \xi_{(a,1)}^\mu =1\}$ .
Then
\begin{eqnarray*}
\E(X_e X_{e'}) &=&   \P(\exists \mu, \nu \in \mathcal{M}(a,1): \xi_{(b,1)}^\mu=1, \xi_{(b',1)}^\nu=1)\\
&=&   \sum_{r=0}^M   \P(\exists \mu, \nu \in \mathcal{M}(a,1):\xi_{(b,1)}^\mu\xi_{(b',1)}^\nu=1|\, |\mathcal{M}(a,1)|=r) \P(|\mathcal{M}(a,1)|=r)\\
&=&   \sum_{r=0}^M   \P(\exists \mu \in \mathcal{M}(a,1): \xi_{(b,1)}^\mu=1|\, |\mathcal{M}(a,1)|=r)^2   \P(|\mathcal{M}(a,1)|=r)\\
&=&   \sum_{r=0}^M (1-(1-1/l)^r)^2 \binom M r (1/l)^r (1-1/l)^{M-r},
 \end{eqnarray*}
as on $\mathcal{M}(a,1)$ the events
$$
\{\exists \mu \in \mathcal{M}(a,1): \xi_{(b,1)}^\mu=1\}\quad \mbox{and  }   \{\exists \nu \in \mathcal{M}(a,1): \xi_{(b',1)}^\nu=1\}
$$
are independent and have equal probabilities.
The expression on the right hand side can be simplified to give
\begin{eqnarray*}
\E(X_e X_{e'}) &=& 1- 2  \sum_{r=0}^M  \binom M r (1/l)^r (1-1/l)^{M}+ \sum_{r=0}^M  \binom M r (1/l)^r (1-1/l)^{M+r}\\
&=&     1- 2(1-1/l)^M  (1+1/l)^M +   (1-1/l)^M(1+\frac{1}{l} (1-1/l))^M\\
&=&     1- 2(1-1/l^2)^M +\left(1-2/l^2+ 1/l^3\right)^M.
\end{eqnarray*}

On the other hand,
$$
(\E(X_e))^2=(\P(X_e=1))^2= d^2= (1-(1-\frac1{l^2})^M)^2.
$$
This yields
\begin{eqnarray*}
\mbox{Cov}(X_e, X_{e'}) &=&     1- 2\left(1-1/l^2\right)^M +\left(1-2/l^2+ 1/l^3\right)^M- \left(1-\left(1-1/l^2\right)^M\right)^2\\
 &=&    \left(1-2/l^2+ 1/l^3\right)^M -   \left(1-2/l^2+ 1/l^4\right)^M \\
 &=&  \exp\Bigl(M \log \bigl(1-2/l^2+ 1/l^3\bigr)\Bigr)- \exp\Bigl(M \log \bigl(1-2/l^2+ 1/l^4\bigr)\Bigr)\\
  &=&\exp\left(-2M/l^2\right) \left(M/l^3+ \mathcal{O}\left(M/l^4\right)\right),
\end{eqnarray*}
after expanding the logarithm and the exponential and taking into account that $M(\frac 1l)^3$ converges to 0 for our choice of the parameters.
Thus for
$$
M=\alpha (\log \log N) N^2/(\log N)^2
$$
we obtain because of $c= \log l \approx \log N$.
\begin{eqnarray*}
\sum_{e,e'\in V} \mbox{Cov}(X_e, X_{e'}) &\le& \alpha (\log \log N) c^4 \exp(-2\alpha \log \log N)/N\\
 &\approx & \frac 1 N  \alpha (\log \log N) (\log N)^4 \exp(-2\alpha \log \log N)
\end{eqnarray*}
Inserting this into \eqref{exp_ineq},
we obtain
\begin{eqnarray*}
 \P[{\mathcal G}(\xi^0)] &=& \P[Z=c (c-1)/2]    \\
&\le & d^L   +  \sum_{e,e'\in V} \mbox{Cov}(X_e, X_{e'}) \\
&\le &  d^L + \frac 1 {N} \alpha (\log \log N) (\log N)^4 \exp(-2 \alpha \log \log N)\\
&\le &  d^L + \frac 1 {N} \alpha  (\log N)^{(4-2 \alpha)} \log \log N
\end{eqnarray*}
The second summand on the right hand side clearly vanishes. But also $d^L$ converges to 0 for $\alpha <2$ (which can be seen as in the first part of the proof).
 Thus $\P[{\mathcal G}(\xi^0)]$ converges to 0,  and we can remark that  $\P[{\mathcal G}(\xi^0)]$ is exactly of order $d^L$ for $\alpha\in ]1,2[$.
 \end{proof}

\begin{rem}
\normalfont
The above computation also justifies a choice of $c$ that is not of constant order. Indeed, for $c$ being a constant independent of $N$ the same approximation of  $\P[{\mathcal G}(\xi^0)]$ by $d^L$ is true. However
$d^L$ converges to a constant larger than 0, even if $M=l^2$.
\end{rem}

A very similar theorem holds true, for the Willshaw model with an intensity of $1$s given by $\P(\xi_i^\mu=1)= \frac{\log N}N$.

\begin{theorem}
Consider the Willshaw model with i.i.d. messages and coordinates such that $\P(\xi_i^\mu=1)= \frac{\log N}N$. Consider the threshold retrieval dynamics with threshold $h=c$.                                                                                                                                                                                                                             Take $M= \alpha \frac{N^2}{(\log N)^2} \log \log N$.

If $\alpha> 2$ a random message with $c$ active neurons (independent of the stored
patterns) will be recognized as a stored message with probability converging to 1 as $l\rightarrow \infty$.

If  $\alpha=2$ and as $l\rightarrow \infty$, with strictly positive probability a random message will be recognized as a stored message.

On the other hand, if $\alpha < 2$ the probability that a random message will be recognized as stored goes to zero as $l\rightarrow \infty$.
\end{theorem}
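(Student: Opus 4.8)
The plan is to transport the proof of Theorem~\ref{Thsection3} almost verbatim, under the dictionary $1/l\leftrightarrow p=\frac{\log N}{N}$ and $\log l\leftrightarrow c=\log N$; with $M=\alpha\frac{N^2}{(\log N)^2}\log\log N$ one has $Mp^2=\alpha\log\log N=\alpha\log c$, which is exactly the relation $M/l^2=\alpha\log c$ that drove the cluster computation. As there, ``$\xi^0$ is recognized as a stored message'' is read as the event $\mathcal{G}(\xi^0)$ that every synapse spanned by $\xi^0$ is switched on, i.e.\ that for each pair $i\neq j$ among the $c$ neurons active in $\xi^0$ there is some $\mu$ with $\xi_i^\mu\xi_j^\mu=1$. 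After relabelling we assume these active neurons are $1,\dots,c$, so only the i.i.d.\ Bernoulli$(p)$ variables $(\xi_i^\mu)_{1\le i\le c,\,1\le\mu\le M}$ are relevant. (One caveat worth recording: unlike in the GB model, where the block structure protects inactive neurons, this event is strictly weaker than ``$\xi^0$ is a fixed point of the threshold dynamics''; but it is the exact analogue of the notion of ``stored'' used in Theorem~\ref{Thsection3}.) Set $X_e=\max_{1\le\mu\le M}\xi_i^\mu\xi_j^\mu$ for an edge $e=\{i,j\}$ with $1\le i<j\le c$, let $V$ be the set of these $L:=c(c-1)/2$ edges, and put $d:=1-(1-p^2)^M$, so that $\P(X_e=1)=d$.

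For the two positive statements, note that the $X_e$ are increasing functions of independent random variables, hence positively associated, so
$$\P(\mathcal{G}(\xi^0))=\P\bigl(\forall e\in V:\ X_e=1\bigr)\ \ge\ d^{L}=\bigl(1-(1-p^2)^M\bigr)^{c(c-1)/2}.$$
Since $Mp^4\to0$ one has $(1-p^2)^M=e^{-Mp^2}(1+o(1))=(\log N)^{-\alpha}(1+o(1))$, whence the right-hand side equals $\exp\bigl(-\tfrac12 c^2(1-p^2)^M(1+o(1))\bigr)=\exp\bigl(-\tfrac12(\log N)^{2-\alpha}(1+o(1))\bigr)$, which tends to $1$ if $\alpha>2$ and to $e^{-1/2}$ if $\alpha=2$.

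For the third statement, write $Z=\sum_{e\in V}X_e$, so $\P(\mathcal{G}(\xi^0))=\P(Z=L)$, and apply the above inequality of \cite{Boutsikas2000} to the positively associated integer variables $Y_e=1-X_e$, obtaining $\P(Z=L)\le d^{L}+\sum_{e\neq e'}\mathrm{Cov}(X_e,X_{e'})$. Exactly as in the proof of Theorem~\ref{Thsection3}, $\mathrm{Cov}(X_e,X_{e'})=0$ unless $e$ and $e'$ share a vertex; conditioning, for a shared vertex $i$, on $|\{\mu:\xi_i^\mu=1\}|$ and summing the binomial series as there gives
$$\mathrm{Cov}(X_e,X_{e'})=(1-2p^2+p^3)^M-(1-2p^2+p^4)^M\sim e^{-2Mp^2}\,Mp^3,$$
where one uses $Mp^3=\alpha\frac{(\log N)(\log\log N)}{N}\to0$. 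There are $\mathcal{O}(c^3)$ pairs $e\neq e'$ with a common vertex, so $\sum_{e\neq e'}\mathrm{Cov}(X_e,X_{e'})=\mathcal{O}\bigl(c^3e^{-2Mp^2}Mp^3\bigr)=\mathcal{O}\bigl((\log N)^{4-2\alpha}(\log\log N)/N\bigr)\to0$. Finally, exactly as in the lower bound, $d^{L}=\exp\bigl(-\tfrac12(\log N)^{2-\alpha}(1+o(1))\bigr)\to0$ when $\alpha<2$. Hence $\P(\mathcal{G}(\xi^0))\to0$ for $\alpha<2$.

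I expect no real obstacle beyond bookkeeping: once the dictionary above is fixed, the manipulations are identical to those of Theorem~\ref{Thsection3}. The only two points needing a moment's care are (i) verifying that the error terms in the covariance expansion are genuinely of smaller order, which reduces to $Mp^3\to0$ (and $Mp^4\to0$), and (ii) verifying that the polylogarithmic factor $(\log N)^{4-2\alpha}\log\log N$ produced by summing the covariances is killed by the prefactor $1/N$ --- both of which hold for the stated choice of $M$.
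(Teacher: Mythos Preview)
Your proposal is correct and matches the paper's approach exactly: the paper itself says ``The proof is almost identical to the proof of the previous theorem. We therefore omit it,'' and you have spelled out precisely that identification via the dictionary $p\leftrightarrow 1/l$, $c=\log N\leftrightarrow c=\log l$, so that $Mp^2=\alpha\log c$ plays the role of $M/l^2=\alpha\log c$. Your covariance bound using $\mathcal{O}(c^3)$ overlapping pairs is in fact slightly sharper than the paper's $c^4$ in the proof of Theorem~\ref{Thsection3}, and your caveat about $\mathcal{G}(\xi^0)$ being weaker than fixed-point stability (because the Willshaw model lacks the block structure that suppresses spurious activations of inactive neurons) is a pertinent observation that the paper glosses over.
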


The proof is almost identical to the proof of the previous theorem. We therefore omit it.

\section{Stability and error correction}
In this section we will try to give lower and sometimes also upper bounds on the number of patterns we can store in the various models, such that the given messages are stable under the dynamics of the network and errors in the input can be corrected.

We saw that in the GB model and the Willshaw model, slightly more than $N^2/(\log N)^2$ already suffice to supersaturate the networks. We will therefore always assume that $M=\alpha N^2/(\log N)^2$.

We start with Amari's model.

\begin{theorem}  \label{amari}
Suppose that in Amari's model with threshold $h=\gamma \log N$ ($\gamma <1$ to be chosen appropriately),  we have that $M=\alpha N^2/(\log N)^2$. Then, if $\alpha <e^{-2}$ for any fixed $\mu$, we have
$$
\P(\forall i: T_i(\xi^\mu)=\xi_i^\mu) \to 1
$$
as $N \to \infty$.

Moreover, for any error rate $0 < \rho < 1$, if $\gamma < 1-\rho$ is chosen appropriately and $\alpha <(1-\rho)e^{-(1+\frac 1{1+\rho})}$,  for any fixed $\mu$,  and any $\tilde \xi^\mu$ obtained by deleting at random $\rho \log N$ of the 1's in $\xi^\mu$, we have:
$$
\P(\forall i:  T_i(\tilde \xi^\mu)=\xi_i^\mu) \to 1
$$
as $N \to \infty$.

Finally, if $M > -\log (1-e^{-1})N^2/(\log N)^2$
$$
\P(\forall i: T_i(\xi^\mu)=\xi_i^\mu) \to 0
$$
as $N \to \infty$.
\end{theorem}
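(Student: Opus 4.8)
The plan is to show that, with probability tending to one, at least one coordinate $i$ with $\xi^\mu_i=0$ is flipped to $1$ by a single step of the dynamics; this alone prevents $\xi^\mu$ from being a fixed point. Write $\mathcal{J}=\{j:\xi^\mu_j=1\}$ for the support of $\xi^\mu$. Conditionally on $\xi^\mu$, $|\mathcal{J}|$ is $\mathrm{Bin}(N,p)$-distributed with $p=\log N/N$, so for any fixed $\epsilon>0$ we have $(1-\epsilon)\log N\le|\mathcal{J}|\le(1+\epsilon)\log N$ with probability tending to one. For $i\notin\mathcal{J}$ one has $\xi^\mu_i=0$, hence $J_{ij}=\sum_{\nu\ne\mu}\xi^\nu_i\xi^\nu_j$ and $S_i(\xi^\mu)=\sum_{j\in\mathcal{J}}J_{ij}$. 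Call $i$ \emph{bad} if $J_{ij}\ge1$ for every $j\in\mathcal{J}$; then $S_i(\xi^\mu)\ge|\mathcal{J}|\ge(1-\epsilon)\log N>\gamma\log N=h$ as soon as $\epsilon<1-\gamma$, so a single bad $i$ forces $T_i(\xi^\mu)=1\ne\xi^\mu_i$. Since $J_{ij}$ is non-decreasing in the number of stored patterns, the event ``some $i\notin\mathcal{J}$ is bad'' is monotone in $M$, and it therefore suffices to treat $M=\lceil(\theta+\eta)N^2/(\log N)^2\rceil$ with $\theta:=-\log(1-e^{-1})$ and $\eta>0$ fixed; note that $Mp^2\to\theta+\eta$ and $Mp\sim(\theta+\eta)N/\log N$.

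The crucial step is a conditioning that decouples the badness events across $i$. For $j\in\mathcal{J}$ put $A_j:=\{\nu\ne\mu:\xi^\nu_j=1\}$; then $i$ is bad precisely when, for every $j\in\mathcal{J}$, the set $A_j$ meets $\{\nu:\xi^\nu_i=1\}$. Conditioning on $\xi^\mu$ together with the whole array $(\xi^\nu_j)_{j\in\mathcal{J},\,\nu\ne\mu}$ — equivalently, on $\mathcal{J}$ and on the sets $(A_j)_{j\in\mathcal{J}}$ — the event $\{i\text{ bad}\}$ becomes a function of the independent family $(\xi^\nu_i)_{\nu\ne\mu}$ alone, so for distinct $i\notin\mathcal{J}$ these events are conditionally independent; moreover each $\{i\text{ bad}\}$ is an intersection of events that are increasing in the $\xi^\nu_i$, so by positive association (as recalled in Section~3) its conditional probability is at least $q^\ast:=\prod_{j\in\mathcal{J}}\bigl(1-(1-p)^{|A_j|}\bigr)$, the same value for every $i$. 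Hence
$$
\P\bigl(\text{no bad }i\mid\xi^\mu,(A_j)\bigr)\le(1-q^\ast)^{N-|\mathcal{J}|}\le\exp\bigl(-q^\ast(N-|\mathcal{J}|)\bigr).
$$

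It remains to bound $q^\ast$ from below on a high-probability event. Each $|A_j|$ is $\mathrm{Bin}(M-1,p)$-distributed with mean $\sim(\theta+\eta)N/\log N$, so a Chernoff bound together with a union bound over the at most $(1+\epsilon)\log N$ indices of $\mathcal{J}$ shows that $|A_j|\ge(1-\epsilon)Mp$ for all $j\in\mathcal{J}$ with probability tending to one. On the intersection of this event with $\{|\mathcal{J}|\le(1+\epsilon)\log N\}$ one has
$$
q^\ast\ge\bigl(1-(1-p)^{(1-\epsilon)Mp}\bigr)^{(1+\epsilon)\log N}=N^{(1+\epsilon)\log\bigl(1-(1-p)^{(1-\epsilon)Mp}\bigr)}.
$$
Since $(1-p)^{(1-\epsilon)Mp}\to e^{-(1-\epsilon)(\theta+\eta)}$ (using $Mp^2\to\theta+\eta$ and $\log(1-p)=-p+\mathcal{O}(p^2)$), the exponent tends to $-\beta_\epsilon$ with $\beta_\epsilon:=-(1+\epsilon)\log\bigl(1-e^{-(1-\epsilon)(\theta+\eta)}\bigr)$, whence $q^\ast\ge N^{-\beta_\epsilon+o(1)}$ and, using $N-|\mathcal{J}|\ge\tfrac12N$, $\exp\bigl(-q^\ast(N-|\mathcal{J}|)\bigr)\le\exp\bigl(-\tfrac12N^{\,1-\beta_\epsilon+o(1)}\bigr)$. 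Because $\theta+\eta>\theta=-\log(1-e^{-1})$ we have $-\log(1-e^{-(\theta+\eta)})<1$, so $\beta_\epsilon<1$ for $\epsilon$ small enough, and then the right-hand side tends to $0$. Taking expectations and adding the vanishing probabilities of the complements of the two concentration events gives $\P(\forall i:T_i(\xi^\mu)=\xi^\mu_i)\le\P(\text{no bad }i)\to0$.

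The step I expect to be most delicate is the last one: $\epsilon$ must be chosen small enough to serve two purposes at once — $\epsilon<1-\gamma$, so that a bad coordinate really exceeds the threshold, and $\beta_\epsilon<1$, which is exactly where the threshold constant $-\log(1-e^{-1})$ enters — after which one lets $N\to\infty$ and only then $\epsilon\to0$, all the while checking that the $o(1)$ corrections in $(1-p)^{(1-\epsilon)Mp}\to e^{-(1-\epsilon)(\theta+\eta)}$ do not spoil the surviving exponent $1-\beta_\epsilon$. The remaining ingredients — concentration of $|\mathcal{J}|$ and of the $|A_j|$, the positive-association bound, and the crude estimate $S_i(\xi^\mu)\ge|\mathcal{J}|$ in place of a sharper analysis of the field — are routine once this decoupling conditioning is set up.
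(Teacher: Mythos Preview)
Your proposal treats only the third (``Finally'') claim of the theorem; you do not address the first two parts (stability under the dynamics and one-step error correction). In the paper those parts are proved by decomposing the local field $S_i(\xi^1)$ into the signal $\xi_i^1\sum_{j\ne i}\xi_j^1$ and the noise $\sum_{j\ne i}\xi_j^1\sum_{\nu\ge2}\xi_i^\nu\xi_j^\nu$, conditioning on $\sum_j\xi_j^1$ lying in a window around $\log N$, and then bounding the noise by an exponential Markov/Chernoff inequality optimised over the tilt parameter; this optimisation is precisely what produces the conditions $\alpha<e^{-2}$ and $\alpha<(1-\rho)e^{-(1+1/(1+\rho))}$. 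None of this is present in your write-up.

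For the third claim, however, your argument is correct and follows essentially the paper's own route. The paper in fact borrows this step from its later proof of the Willshaw WTA bound: it conditions on the columns $(\xi^\nu_j)_{j\in\mathcal{J},\,\nu\ne\mu}$ so that the events ``$i$ is bad'' become conditionally independent across $i\notin\mathcal{J}$, lower-bounds each conditional probability by $\prod_{j\in\mathcal{J}}\bigl(1-(1-p)^{W_j}\bigr)$ via positive association (your $|A_j|$ is the paper's $W_j$), concentrates the $W_j$ around $Mp$, and reads off the threshold $-\log(1-e^{-1})$ from exactly the computation you carry out. Your care about choosing $\epsilon$ small enough to satisfy both $\epsilon<1-\gamma$ and $\beta_\epsilon<1$ is well placed and mirrors the paper's handling.
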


It is interesting to observe that the previous theorem also gives a result on the Willshaw model with a threshold dynamics.

\begin{corollary} \label{willshaw1}
In the Willshaw model with i.i.d. random variables $\xi^\mu_i$, threshold $h=\gamma \log(N)$, $\gamma <1$ and $M=\alpha N^2/(\log N)^2$ for $\alpha <e^{-2}$ we have for any fixed $\mu$
$$
\P(\forall i: T_i(\xi^\mu)=\xi_i^\mu) \to 1
$$
as $N \to \infty$.

Moreover, for any error rate $0 < \rho < 1$, if $\gamma < 1-\rho$ is chosen appropriately and $\alpha
<(1-\rho)e^{-(1+\frac 1{1+\rho})}$,  for any fixed $\mu$,  and any $\tilde \xi^\mu$ obtained by deleting at random $\rho \log N$ of the 1's in $\xi^\mu$, we have:
$$
\P(\forall i:  T_i(\tilde \xi^\mu)=\xi_i^\mu) \to 1
$$
as $N \to \infty$.

Finally, if $M > -\log (1-e^{-1})N^2/(\log N)^2$
$$
\P(\forall i: T_i(\xi^\mu)=\xi_i^\mu) \to 0
$$
as $N \to \infty$.
\end{corollary}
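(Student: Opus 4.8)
The plan is to deduce all three statements from Theorem~\ref{amari} by comparing the Willshaw threshold dynamics with Amari's coordinate by coordinate. Write $J^{\mathrm A}_{ij}=\sum_\mu\xi^\mu_i\xi^\mu_j$ for Amari's synapses and $J^{\mathrm W}_{ij}=\Theta(J^{\mathrm A}_{ij}-1)=\min(J^{\mathrm A}_{ij},1)$ for the clipped Willshaw ones, so that $0\le J^{\mathrm W}_{ij}\le J^{\mathrm A}_{ij}$ and $J^{\mathrm W}_{ij}=1$ if and only if $J^{\mathrm A}_{ij}\ge1$; accordingly put $S^{\mathrm A}_i(\sigma)=\sum_{j\ne i}J^{\mathrm A}_{ij}\sigma_j$ and $\bar S^{\mathrm W}_i(\sigma)=\sum_j J^{\mathrm W}_{ij}\sigma_j$ (the latter with memory effect), with dynamics $T^{\mathrm A}_i,T^{\mathrm W}_i$ at threshold $h=\gamma\log N$. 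Throughout I use that $n_\mu:=\sum_i\xi^\mu_i\sim\mathrm{Bin}(N,\log N/N)$ concentrates, so that $\P((1-\epsilon)\log N\le n_\mu\le(1+\epsilon)\log N)\to1$ for every fixed $\epsilon>0$.

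For the first two assertions I would take as input $\zeta$ either $\xi^\mu$ or an $\rho$-erased copy $\tilde\xi^\mu$, so that $\zeta_j\le\xi^\mu_j$ for all $j$ and $\zeta$ carries $n$ ones with $n=n_\mu$, resp.\ $n=n_\mu-\lceil\rho\log N\rceil$, and then split on $\xi^\mu_i$. If $\xi^\mu_i=1$, every active index $j$ of $\zeta$ is also active in $\xi^\mu$, hence $J^{\mathrm A}_{ij}\ge1$, hence $J^{\mathrm W}_{ij}=1$, so $\bar S^{\mathrm W}_i(\zeta)=n$ \emph{exactly}; on the event $\{n\ge\gamma\log N\}$ — which has probability tending to one because $\gamma<1$, resp.\ $\gamma<1-\rho$, together with the concentration of $n_\mu$ — this yields $T^{\mathrm W}_i(\zeta)=1=\xi^\mu_i$. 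If $\xi^\mu_i=0$, the self-term vanishes and $J^{\mathrm W}\le J^{\mathrm A}$ gives $\bar S^{\mathrm W}_i(\zeta)=\sum_{j\ne i}J^{\mathrm W}_{ij}\zeta_j\le S^{\mathrm A}_i(\zeta)$, so on the event provided by Theorem~\ref{amari} that Amari reconstructs $\xi^\mu$ from $\zeta$ (which holds with probability $\to1$ under the stated hypotheses on $\gamma,\alpha$, resp.\ $\gamma,\alpha,\rho$) we get $S^{\mathrm A}_i(\zeta)<h$, hence $\bar S^{\mathrm W}_i(\zeta)<h$ and $T^{\mathrm W}_i(\zeta)=0=\xi^\mu_i$. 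Intersecting these two events of full asymptotic probability proves the first two statements.

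For the last statement the plan is to exhibit, with probability tending to one, a coordinate at which $\xi^\mu$ fails to be a fixed point of $T^{\mathrm W}$. Let $A_\mu=\{i:\xi^\mu_i=1\}$ and call $i\notin A_\mu$ \emph{bad} if $J^{\mathrm W}_{ij}=1$ for all $j\in A_\mu$; for such $i$ one has $\bar S^{\mathrm W}_i(\xi^\mu)=|A_\mu|=n_\mu$, so on $\{n_\mu\ge\gamma\log N\}$, $T^{\mathrm W}_i(\xi^\mu)=1\ne0=\xi^\mu_i$. Hence it suffices to show that, for $M>-\log(1-e^{-1})N^2/(\log N)^2$, a bad $i$ exists with probability tending to one. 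Since a bad $i$ also satisfies $S^{\mathrm A}_i(\xi^\mu)=\sum_{j\in A_\mu}J^{\mathrm A}_{ij}\ge n_\mu\ge h$, the event ``some $i$ is bad'' is (up to the negligible event $\{n_\mu<\gamma\log N\}$) contained in the failure event of Theorem~\ref{amari}, and the coincidence of the critical constant $-\log(1-e^{-1})$ makes me expect that this is exactly the mechanism exploited there, so one route is simply to quote that proof. More self-containedly, I would run a first/second moment argument in the spirit of Section~3: conditioning on the bits $(\xi^\nu_j)_{j\in A_\mu,\,\nu}$ makes the events $\{i\text{ bad}\}$, $i\notin A_\mu$, conditionally independent, each of conditional probability concentrated near $(1-e^{-\beta})^{n_\mu}$ with $\beta=M(\log N/N)^2$ (because the supports $\{\nu:\xi^\nu_j=1\}$, $j\in A_\mu$, are pairwise almost disjoint), and since $N(1-e^{-\beta})^{\log N}=N^{1+\log(1-e^{-\beta})}\to\infty$ precisely for $\beta>-\log(1-e^{-1})$, the conditional first moment of the number of bad indices diverges while the conditional variance is dominated by it, so Chebyshev gives a bad $i$ with probability tending to one.

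The genuinely routine parts are the binomial concentration bounds, the two reductions to Theorem~\ref{amari}, and the estimate $(1-e^{-\beta})^{n_\mu}=N^{(1+o(1))\log(1-e^{-\beta})}$. The step I expect to be the main obstacle is the second-moment (decoupling) argument in the last part: one has to make the near-disjointness of the supports $\{\nu:\xi^\nu_j=1\}$, $j\in A_\mu$, quantitative enough that the conditional probabilities above concentrate, so that the second moment of the number of bad indices really matches the square of its mean. If one prefers to import the statement from Theorem~\ref{amari} instead, the analogous obstacle is to verify that its proof isolates exactly this ``full false linking'' event rather than some other route to $S^{\mathrm A}_i(\xi^\mu)\ge h$.
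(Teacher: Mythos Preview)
Your proposal is correct and matches the paper's approach: for the first two parts the paper uses precisely the domination $J^{\mathrm W}_{ij}\le J^{\mathrm A}_{ij}$ (together with the deterministic observation that $\bar S^{\mathrm W}_i(\zeta)=n$ when $\xi^\mu_i=1$) to reduce to Theorem~\ref{amari}, and for the third part it likewise exhibits a ``bad'' index $i\notin A_\mu$ with $J^{\mathrm W}_{ij}=1$ for all $j\in A_\mu$, quoting that existence result from the proof of Theorem~\ref{amari} (which in turn borrows it from the proof of Theorem~\ref{willshaw2}). The only point of divergence is your alternative self-contained route for the third part: the paper's argument in Theorem~\ref{willshaw2} conditions on $(\xi^\nu_j)_{j\in A_\mu,\nu}$ to get conditional independence over $i$ (as you do), but then lower-bounds the conditional probability that a fixed $i$ is bad via positive association of the $X_j=\sum_\nu\xi^\nu_i\xi^\nu_j$ rather than via near-disjointness of supports and a second moment---this FKG step cleanly avoids the decoupling obstacle you flagged.
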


In computer simulations the threshold dynamics in the Willshaw model is outperformed by WTA.
Our theoretical results are by now limited to the question of the stability of messages and one step
of the retrieval dynamics.

\begin{theorem}\label{willshaw2}
Consider the Willshaw model with i.i.d. messages and coordinates such that $\P[\xi_i^\mu=1]=\frac cN$, where $c=\log(N)$. Consider the WTA dynamics with threshold $h_{(1)}$ and let
$M=\alpha N^2/(\log N)^2$. Then for $\alpha<-\log(1-e^{-1})$ we have for any fixed $\mu$
$$
\P(\forall i: T_i(\xi^\mu)=\xi_i^\mu) \to 1
$$
as $N \to \infty$.

This bound is sharp: For $\alpha>-\log(1-e^{-1})$ we have for any fixed $\mu$
$$
\P(\exists i: T_i(\xi^\mu)\not=\xi_i^\mu) \to 1
$$
as $N \to \infty$.

Finally,  if $\rho \log N$, $0\le \rho <1$
of the initial 1's of message $\xi^\mu$ are erased at random to obtain $\tilde \xi^\mu$,
we can prove the following result:\\
For $\alpha<-\log(1-e^{-1/(1-\rho)})$ we have for any fixed $\mu$
$$
\P(\forall i: T_i(\tilde \xi^\mu)=\xi_i^\mu) \to 1
$$
as $N \to \infty$.

Again, this bound is sharp: For $\alpha>-\log(1-e^{-1/(1-\rho)})$ we have for any fixed $\mu$
$$
\P(\exists i: T_i(\tilde\xi^\mu)\not=\xi_i^\mu) \to 1
$$
as $N \to \infty$.
\end{theorem}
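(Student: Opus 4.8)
The plan is to notice that all four assertions are instances of one first‑ and second‑moment computation; in particular the two stability statements are the two erasure statements at $\rho=0$ (since $-\log(1-e^{-1/(1-0)})=-\log(1-e^{-1})$), so I would only treat the erasure case $0\le\rho<1$. Fix $\mu$ and condition on $\xi^\mu$ and on the independent choice of which $1$'s are erased; write $A=\{j:\xi_j^\mu=1\}$ for the active set of $\xi^\mu$, $\tilde A\subseteq A$ for its retained part (so $|\tilde A|=|A|-\rho\log N$), and put $p=\log N/N$. Chernoff bounds give $|A|=(1+o(1))\log N$ and, since $Mp=\alpha N/\log N\to\infty$, every neuron is active in at least one pattern, with probability tending to $1$; I work on these events. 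The structural point that makes the one‑step WTA analysis tractable is that $\bar S_i(\tilde\xi^\mu)=\sum_{j\in\tilde A}J_{ij}\le|\tilde A|$ for every $i$, with equality whenever $i\in A$ (because $J_{ij}=1$ as soon as $i$ and $j$ are simultaneously active in $\xi^\mu$). Hence $h_{(1)}=|\tilde A|$, every $i\in A$ is switched to $1$ (so the erased bits are restored), and the step returns exactly $\xi^\mu$ if and only if there is no $i\notin A$ with $J_{ij}=1$ for all $j\in\tilde A$. Calling such an $i$ \emph{bad} and letting $W$ be the number of bad neurons, the whole theorem reduces to $\P(W\ge1)\to0$ for $\alpha<-\log(1-e^{-1/(1-\rho)})$ and $\P(W\ge1)\to1$ for $\alpha>-\log(1-e^{-1/(1-\rho)})$.

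For the first direction I would bound $\E[W\mid\xi^\mu]$. For a fixed $i\notin A$, conditioning on $|\mathcal M_i|=r$ with $\mathcal M_i:=\{\nu\ne\mu:\xi_i^\nu=1\}$ makes the events $\{J_{ij}=1\}_{j\in\tilde A}$ independent, each of probability $1-(1-p)^r$, so $\P(i\text{ bad}\mid\xi^\mu,|\mathcal M_i|=r)=(1-(1-p)^r)^{|\tilde A|}$; averaging over $r\sim\mathrm{Bin}(M-1,p)$, which concentrates in a window around $Mp=\alpha N/\log N$ so that $(1-p)^r$ is close to $e^{-\alpha}$, and discarding the exponentially small lower‑tail contribution, gives $\P(i\text{ bad}\mid\xi^\mu)=N^{(1+o(1))(1-\rho)\log(1-e^{-\alpha'})}$ for any $\alpha'$ slightly below $\alpha$. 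Multiplying by $N-|A|$ and using $|\tilde A|=(1+o(1))(1-\rho)\log N$ yields $\E[W\mid\xi^\mu]\le N^{\,1+(1+o(1))(1-\rho)\log(1-e^{-\alpha'})}$, whose exponent is negative once $\alpha<-\log(1-e^{-1/(1-\rho)})$ and $\alpha'$ is close enough to $\alpha$ (by continuity); Markov's inequality and integrating out $\xi^\mu$ then finish this half.

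For sharpness I would make the bad events conditionally independent. Conditioning, in addition to $\xi^\mu$, on $B:=(\xi_j^\nu)_{j\in A,\nu\ne\mu}$, the event ``$i$ bad'' becomes $\bigcap_{j\in\tilde A}\bigcup_{\nu\in S_j}\{\xi_i^\nu=1\}$ with $S_j:=\{\nu\ne\mu:\xi_j^\nu=1\}$, a function of $(\xi_i^\nu)_\nu$ only, so $W\mid(\xi^\mu,B)\sim\mathrm{Bin}(N-|A|,q(B))$ for a common $q(B)$. Since the events $\bigcup_{\nu\in S_j}\{\xi_i^\nu=1\}$, $j\in\tilde A$, are increasing in the independent family $(\xi_i^\nu)_\nu$, positive association gives $q(B)\ge\prod_{j\in\tilde A}(1-(1-p)^{|S_j|})$; a union bound over the $O(\log N)$ indices $j\in\tilde A$ (each $|S_j|\sim\mathrm{Bin}(M-1,p)$ concentrates) shows that with probability $\to1$ all $|S_j|\ge(1-\epsilon)Mp$, whence $q(B)\ge N^{(1+o(1))(1-\rho)\log(1-e^{-(1-\epsilon)\alpha})}$ and $\E[W\mid\xi^\mu,B]\ge N^{\,1+(1+o(1))(1-\rho)\log(1-e^{-(1-\epsilon)\alpha})}$. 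For $\alpha>-\log(1-e^{-1/(1-\rho)})$ this exponent is strictly positive once $\epsilon$ is small enough, so on the good event $\P(W=0\mid\xi^\mu,B)\le e^{-(N-|A|)q(B)}\to0$, and integrating out gives $\P(\exists i:T_i(\tilde\xi^\mu)\ne\xi_i^\mu)\to1$. Specializing to $\rho=0$ recovers the first two claims.

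The main obstacle is precisely this sharpness direction: the bad events genuinely depend on each other through the patterns restricted to $A$, so a naive second‑moment estimate would force a comparison of $\E[q(B)^2]$ with $\E[q(B)]^2$. Conditioning on $B$ to restore independence, and then invoking positive association to replace $q(B)$ by a product of concentrating factors, is the device that circumvents this. The one point to watch is that $|\tilde A|$ sits in the exponent, so all the $(1+o(1))$ and $(1\pm\epsilon)$ factors — including the width of the concentration windows for $r$ and for the $|S_j|$ — must be tracked at the level of exponents, the exact threshold being recovered only after letting these perturbation parameters tend to $0$.
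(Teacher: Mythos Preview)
Your proof follows essentially the same route as the paper's: the reduction to ``bad'' neurons $i\notin A$ with $J_{ij}=1$ for all $j\in\tilde A$, a first-moment (union) bound for the subcritical direction via conditioning on $|\mathcal M_i|$ (the paper carries this out by an exact binomial expansion rather than by concentration of $r$, but the content is the same), and for the supercritical direction conditioning on the patterns restricted to $A$ to make the bad events independent across $i$, followed by positive association and concentration of the $|S_j|$ (the paper's $W_j$). One small slip: in your first-moment step the auxiliary $\alpha'$ should be taken slightly \emph{above} $\alpha$, since it is the upper tail of $r$ you must discard to get an upper bound on $(1-(1-p)^r)^{|\tilde A|}$; this is immaterial to the structure of the argument.
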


\begin{rem}
\normalfont
For mathematical convenience, we assumed in  Th.  \ref{willshaw2} that the stored messages are independent, with i.i.d. coordinates  $(\xi_i^\mu)$ such that  $$\P[\xi_i^\mu=1]=\frac cN.$$
We can naturally expect the same results in the case where exactly $c$ neurons are active in each stored message, but properties of independence are lacking to prove such results in this situation.
\end{rem}

A very similar statement holds for the GB model with the WTA algorithm.

\begin{theorem}\label{GB-WTA}
In the GB model with independent messages with WTA dynamics (which again is called $T$) let
$M=\alpha l^2/c^2$. Then for $\alpha<-\log(1-e^{-1})$ we have for any fixed $\mu$
$$
\P(\forall (a,c): T_{(a,c)}(\xi^\mu)=\xi^\mu_{(a,c)}) \to 1
$$
as $N \to \infty$.

If $\rho \log N$ of the initial 1's of a message $\xi^\mu$ are erased at random to  construct $\tilde \xi^\mu$,
we obtain:
For $\alpha<-\log(1-e^{-1/(1-\rho)})$ we have for any fixed $\mu$
$$
\P(\forall (a,c):  T_{(a,c)}(\tilde\xi^\mu)=\xi^\mu_{(a,c)}) \to 1
$$
as $N \to \infty$.

\end{theorem}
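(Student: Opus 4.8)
\smallskip
\noindent\emph{Proof strategy.} The plan is to run the argument of Theorem~\ref{willshaw2}, replacing single neuron labels by the cluster labels $(a,k)$ and reading off the cluster structure. Here $c=\log l=(1+o(1))\log N$ and $M=\alpha l^2$ (the block-model counterpart of $M=\alpha N^2/(\log N)^2$, since $N=lc$). Fix $\mu=1$ and relabel the $l$ neurons inside each cluster so that $\xi^1_{(a,1)}=1$ for every $a$. Erasing $\rho\log N$ of the $1$'s of $\xi^1$ empties exactly $\rho\log N$ clusters, a message having one active neuron per cluster; let $B$ denote the set of clusters still carrying a $1$ in the input, so that $|B|=c$ for the first assertion and $|B|=:c'=(1-\rho)c\,(1+o(1))$ for the second.

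\smallskip
First I would settle the deterministic part. Since the input is supported on $\{(b,1):b\in B\}$, \eqref{eq:som} reads $s_{(a,k)}(\cdot)=\#\{b\in B:W_{(a,k),(b,1)}=1\}\le|B|$ for every cluster $a$ and every $k$, with equality at $k=1$: $W_{(a,1),(b,1)}=1$ for all $b\ne a$ because clusters $a$ and $b$ are simultaneously active in $\xi^1$, and, when $a\in B$, the intra-cluster term $W_{(a,1),(a,1)}$ equals $1$ as well. Hence the WTA rule always keeps the neuron $(a,1)$, and it keeps it \emph{alone} unless some $k\ne 1$ also attains $s_{(a,k)}(\cdot)=|B|$; for $a\in B$ this cannot occur, as the summand $b=a$ contributes $W_{(a,k),(a,1)}=0$ for $k\ne1$. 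In particular the first statement holds with probability $1$ (a fortiori for $\alpha<-\log(1-e^{-1})$), and for the erasure statement it remains to control, over the $\rho\log N$ emptied clusters $a\notin B$ and the $l-1$ indices $k\ne1$, the event $\mathcal F_{(a,k)}:=\{W_{(a,k),(b,1)}=1\text{ for all }b\in B\}$.

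\smallskip
The core computation is $\P(\mathcal F_{(a,k)})$ for fixed $a\notin B$ and $k\ne1$. Conditionally on $\mathcal M(a,k):=\{\nu\le M:\xi^\nu_{(a,k)}=1\}$, the events $\{W_{(a,k),(b,1)}=1\}$, $b\in B$, are independent (each depends only on which neuron the messages of $\mathcal M(a,k)$ activate in the \emph{distinct} cluster $b$), with common conditional probability $1-(1-1/l)^R$, where $R:=|\mathcal M(a,k)|\sim\mathrm{Bin}(M,1/l)$; hence $\P(\mathcal F_{(a,k)})=\E\bigl[(1-(1-1/l)^R)^{c'}\bigr]$. As $\E R=M/l=\alpha l\to\infty$, a Chernoff bound keeps $R$ within $(1+o(1))M/l$ off an event of probability $e^{-\Theta(l)}$, on which $(1-1/l)^R=\exp(-(1+o(1))M/l^2)=e^{-\alpha+o(1)}$; so for any $\epsilon>0$ and $l$ large, $\P(\mathcal F_{(a,k)})\le(1-e^{-\alpha}+\epsilon)^{c'}+e^{-\Theta(l)}$. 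A union bound over the $\rho\log N$ clusters and $l-1$ indices, using $l=e^{c}$ and $c'=(1-\rho)c(1+o(1))$, then bounds the failure probability by $\rho(\log N)\,l\,(1-e^{-\alpha}+\epsilon)^{c'}+o(1)=\exp\bigl(c[1+(1-\rho)\log(1-e^{-\alpha}+\epsilon)]+O(\log\log N)\bigr)+o(1)$, which tends to $0$ as soon as $1+(1-\rho)\log(1-e^{-\alpha})<0$, i.e.\ $\alpha<-\log(1-e^{-1/(1-\rho)})$ (choosing $\epsilon$ small); for $\rho=0$ this is the constant $-\log(1-e^{-1})$ of the first assertion.

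\smallskip
The one genuinely delicate step is to make the concentration of $R$, and the attendant approximations ($\mathrm{Bin}$ versus $\mathrm{Poisson}$, $\log(1-1/l)$ versus $-1/l$, $\log l$ versus $\log N$), uniform in $(a,k)$, so that the $e^{-\Theta(l)}$ errors survive the union bound --- exactly the bookkeeping performed for Theorem~\ref{willshaw2}. No second-moment argument is needed, since no matching lower bound is claimed; and the positive-association machinery of Theorem~\ref{Thsection3} is not required either, the relevant events here being conditionally independent.
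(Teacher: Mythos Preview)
Your proposal is correct and follows the route the paper itself indicates: adapt the argument of Theorem~\ref{willshaw2} to the cluster structure. The paper's own proof is little more than a pointer to Theorem~\ref{willshaw2} together with the remark that the restriction on the location of the $1$'s does not affect the (non-spatial) argument; you carry this out explicitly. Two differences are worth noting.

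First, you observe that in the GB model the stability statement ($\rho=0$) is in fact \emph{deterministically} true: since within-cluster weights satisfy $W_{(a,k),(a,k')}=0$ for $k\neq k'$, any competing neuron $(a,k)$ in a non-erased cluster $a\in B$ loses the self-term and scores at most $|B|-1$, so the correct neuron $(a,1)$ wins outright. The paper does not isolate this, as it simply defers to the Willshaw argument where stability is genuinely random.

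Second, for the erasure case you condition on $\mathcal{M}(a,k)$ and exploit that the events $\{W_{(a,k),(b,1)}=1\}_{b\in B}$ are then \emph{exactly} independent (messages choose independently across distinct clusters), followed by a Chernoff bound on $R=|\mathcal{M}(a,k)|$. This replaces the binomial-expansion bookkeeping in the proof of Theorem~\ref{willshaw2}; it is a bit cleaner here precisely because the block structure supplies genuine conditional independence rather than the algebraic identities needed in the i.i.d.\ Willshaw setting. Both routes yield the same threshold $\alpha<-\log(1-e^{-1/(1-\rho)})$. Your closing remark that no positive-association or lower-bound argument is needed is also consistent with the paper, which explicitly notes that the matching converse of Theorem~\ref{willshaw2} does not transfer to the GB model for lack of positive association.
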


\section{Proofs}
This section contains the proofs of the results in the previous section.
We start with Theorem \ref{amari}.

\begin{proof}[Proof of Theorem \ref{amari}]
Recall the situation of the theorem.
We choose $h=\gamma\log(N)$ with $\gamma\in(0,1)$. Then, for each $\delta\in(0,1)$,
\begin{align*}
&\mathbb{P}(\exists 1\leq i\leq N, T_i(\xi^1)\neq\xi_i^1)\\
\leq&\mathbb{P}\big(\big\vert\log(N)-\sum_{j}\xi_j^1
\big\vert\ge (1-\delta )\log(N)\big)\\
+&\mathbb{P}\left(\Big\lbrace\exists 1\leq i\leq N, T_i(\xi^1)\neq\xi_i^1\Big\rbrace\text{ }\cap \Big\lbrace\Big\vert\log(N)-\sum_{j}\xi_j^1
\Big\vert<(1-\delta )\log(N)\Big\rbrace\right)
\end{align*}
and the first term disappears as ${N}\rightarrow\infty$ due to the law of large numbers, since the $\xi_i^\mu$ are Bernoulli random variables with success probability $p=\log N/N$.

Let $\delta>\gamma$. If
$$
\Big\vert\log(N)-\sum_j\xi_j^1\Big\vert<(1-\delta)\log(N),$$
we have that
$\sum_j\xi_j^1>\delta\log(N)$, and for each i with $\xi_i^1=1$, we obtain
$$
S_i(\xi^1)=\sum_{j\neq i}J_{ij}\xi^1_j=\xi_i^1\sum_{j\ne i}\xi_j^1+\sum_{j\ne i}\xi_j^1\sum_{\mu=2}^M\xi_i^\mu\xi_j^
\mu\geq\sum_{j\ne i}\xi_j^1\ge \delta\log(N) -1\ge \gamma \log(N),
$$
for $N$ large enough, i.e. $T_i(\xi^1)=1$.

On the other hand, for each $i$ with $\xi_i^1=0$, we get
\begin{align*}
&\mathbb{P}\left(\{T_i(\xi^1)\neq\xi_i^1\}\cap\{\xi_i^1=0\}\cap\Big\{\Big\vert\log(N)-\sum_{j}\xi_j^1
\Big\vert<(1-\delta )\log(N)\Big\}\right)\\
\leq&\sum_{k=\lfloor \delta\log(N)\rfloor}^{\lceil (2-\delta)\log(N)\rceil}\mathbb{P}
\left(\Big\{\sum_{j\neq i}\xi_j^1\sum_{\mu=2}^M\xi_i^\mu\xi_j^
\mu\geq \gamma\log(N)\Big\}\cap\{\xi_i^1=0\}\cap \Big\{\sum_{j}\xi_j^1
=k\Big\}\right)\\
=&\sum_{k=\lfloor \delta\log(N)\rfloor}^{\lceil (2-\delta)\log(N)\rceil}\mathbb{P}
\left(\Big\{\sum_{j}\xi_j^1\sum_{\mu=2}^M\xi_i^\mu\xi_j^
\mu\geq \gamma\log(N)\Big\}\cap\{\xi_i^1=0\}\Big\vert\text{   } \sum_{j}\xi_j^1
=k\right)\cdot\mathbb{P}\left(\sum_{j}\xi_j^1
=k\right),\\
\end{align*}
since for $j=i$, the term $\xi_j^1\sum_{\mu=2}^M\xi_i^\mu\xi_j^\mu$ is equal to 0.

This yields
\begin{align*}
&\mathbb{P}\left(\{T_i(\xi^1)\neq\xi^1_i\}\cap\{\xi_i^1=0\}\cap\Big\{\Big\vert\log(N)-\sum_{j}\xi_j^1
\Big\vert<(1-\delta )\log(N)\Big\}\right)\\
\leq&\max_{\lfloor \delta\log(N)\rfloor\leq k\leq \lceil (2-\delta)\log(N)\rceil}\mathbb{P}\left(\sum_j\xi_j^1
\sum_{\mu=2}^M\xi_i^\mu\xi_j^
\mu\geq \gamma\log(N)\Big\vert\text{   } \sum_{j}\xi_j^1
=k\right)\cdot\\
&\sum_{k=\lfloor \delta\log(N)\rfloor}^{\lceil (2-\delta)\log(N)\rceil}\mathbb{P}\left(\sum_{j}\xi_j^1
=k\right)\\
\leq&\mathbb{P}\left(\sum_j\xi_j^1\sum_{\mu=2}^M\xi_i^\mu\xi_j^
\mu\geq \gamma\log(N)\Big\vert\text{   } \sum_{j}\xi_j^1
=\lceil(2-\delta )\log(N)\rceil\right),
\end{align*}
since the quantity $\sum_j\xi_j^1\sum_{\mu=2}^M\xi_i^\mu\xi_j^\mu$ is increasing with  $\sum_j\xi_j^1$, and the maximum is attained for $k=\lceil(2-\delta )\log(N)\rceil$.

Without loss of generality, $(2-\delta)\log(N)\in\mathbb{N}$ and $\xi_j^1=1$, $1\leq j\leq (2-\delta)\log(N)$; $\xi_j^1=0$, $j> (2-\delta) \log(N)$. Then, for each $t>0$,
\begin{align*}
&\mathbb{P}\left(\sum_{j}\xi_j^1\sum_{\mu=2}^M\xi_i^\mu\xi_j^
\mu\geq \gamma\log(N)\Big\vert\text{   } \sum_{j}\xi_j^1
=(2-\delta )\log(N)\right)\\
=&\mathbb{P}\left(\sum_{j=1}^{(2-\delta)\log(N)}\sum_{\mu=2}^M\xi_i^\mu\xi_j^
\mu\geq \gamma\log(N)\right)\\
\leq&e^{-t\gamma\log(N)}\mathbb{E}\exp\left(t\displaystyle\sum_{j=1}^{(2-\delta)
\log(N)}\sum_{\mu=2}^M\xi_i^\mu\xi_j^\mu\right)\\
%\end{align*}
%\begin{align*}
=&e^{-t\gamma\log(N)}\left[\mathbb{E}\exp\left(t\sum_{j=1}^{(2-\delta)
\log(N)}\xi_i^2\xi_j^2\right)\right]^{M-1}\\
=&e^{-t\gamma\log(N)}\left(1-p+p\left(1-p+pe^t\right)^{(2-\delta)\log(N)}\right)^{M-1}\\
\leq&e^{-t\gamma\log(N)}\left(1-p+pe^{p(e^t-1)(2-\delta)\log(N)}\right)^{M-1}\\
\leq&\exp\left[-t\gamma\log(N)+(M-1)p\left(e^{p(e^t-1)(2-\delta)\log(N)}-1\right)\right]\\
=&\exp\left[-t\gamma\log(N)+(M-1)p\left(p(e^t-1)(2-\delta)\log(N)+\mathcal{O}(\log(N)p^2)\right)\right]\\
=&\exp\left[-t\gamma\log(N)+Mp^2(e^t-1)(2-\delta)\log(N)+\mathcal{O}(M\log(N)p^3)\right],
\end{align*}
using $1+u \le e^u$ for all $u\ge 0$, expanding the exponential and assuming $t$ to be small.

Assuming $M=\alpha N^2/\log(N)^2$, we obtain that the last line is equal to
\begin{align*}
&\exp\left[-t\gamma\log(N)+Mp^2(e^t-1)(2-\delta)\log(N)+\mathcal{O}(M\log(N)p^3)\right]\\
=&\exp\left[-t\gamma\log(N)+\alpha(e^t-1)(2-\delta)\log(N)+\mathcal{O}(\log(N)^2/N)\right]\\
=&\exp\left[\log(N)(-t\gamma+\alpha(e^t-1)(2-\delta))
\right](1+o(1)).
\end{align*}
The function $-t\gamma+\alpha(e^t-1)(2-\delta)$ takes its minimum at $t=\log(\gamma/(\alpha(2-\delta)))$.

We aim at showing
$$
\mathbb{P}\left(\exists 1\leq i\leq N, T_i(\xi^1)\neq\xi_i^1\Big\vert\text{ } \Big\vert\log(N)-\sum_{j}\xi_j^1 \Big\vert<(1-\delta )\log(N)\right)\rightarrow0.
$$
Following the lines above, this probability can be estimated by
\begin{align*}
&\mathbb{P}\big(\exists 1\leq i\leq N, T_i(\xi^1)\neq\xi_i^1\big\vert \text{   } \big\vert\log(N)-\sum_{j}\xi_j^1
\big\vert<(1-\delta )\log(N)\big)\\
\leq&\mathbb{P}\big(\exists 1\leq i\leq N,\xi_i^1=0, T_i(\xi^1)\neq\xi_i^1\big\vert \sum_{j}\xi_j^1
=(2-\delta )\log(N)\big)\\
\leq &N\cdot \exp\left[\log(N)(-t\gamma+\alpha(e^t-1)(2-\delta))\right]\\
\leq&N\cdot \exp\left[\log(N)(-\gamma\log(\gamma/((2-\delta)\alpha))+\alpha(2-\delta)(\gamma/
(\alpha(2-\delta))-1))\right]\\
=&N\cdot \exp\left[\log(N)(-\gamma\log(\gamma/((2-\delta)\alpha))+\gamma-\alpha(2-\delta))\right]
\end{align*}
and we need
$$
\gamma\log(\gamma/((2-\delta)\alpha))-\gamma+\alpha(2-\delta)>1,
$$
which is fulfilled if
$$
\alpha<\frac{\gamma}{2-\delta}\frac{1}{e^{1+1/\gamma}}.
$$
So for each $\alpha<e^{-2}$, we can find a threshold $h=\gamma \log(N)$ such that
$$
\mathbb{P}(\exists 1\leq i\leq N, T_i(\xi^1)\neq\xi_i^1)\rightarrow 0.
$$
This proves the first part of the theorem.

\bigskip
For the second part notice that any fixed $\xi^\mu$ will have almost $\log N$ 1's such that we can delete $\rho \log N$ many of them and the statement of the theorem makes sense.
The rest of the proof of part two consists of choosing $h$ now as a value slightly smaller than $(1-\rho) \log N$ and repeating the above arguments. Indeed, call $\tilde \xi^1$ a configuration obtained from $\xi^1$ when deleting $\rho \log N$ 1's. Then, as above, the local field $S_i (\tilde \xi^1)$ splits into a signal term and a noise term:
$$
S_i(\tilde \xi^1)= \sum_{j \neq i} \tilde \xi^1_j J_{ij}=  \xi_i^1 \sum_{j \neq i} \tilde \xi^1_j + \sum_{j \neq i} \tilde \xi^1_j \sum_{\mu \ge 2} \xi_i^\mu \xi_j^\mu.
$$
In comparison to the first part of the proof the ingredient $\sum_{j \neq i} \tilde \xi^1_j$ of the signal term is decreased to a size of $(1-\rho)\log N$, while the noise term $\sum_{j \neq i} \tilde \xi^1_j \sum_{\mu \ge 2} \xi_i^\mu \xi_j^\mu$ is treated in a similar fashion as in part one and is typically of order $\alpha (1-\rho)(\log N)$.

\bigskip
For the third statement of the theorem we will make use of an observation that is also useful in the proof of Theorem \ref{willshaw2} and will actually be shown in this context:
For a message (without loss of generality $\xi^1$) with active neurons $\xi_1^1= \ldots = \xi_c^1=1$ and $\xi_i^1=0$ for all $i \ge c$ we show that for $M$ large enough, i.e. $M=\alpha N^2/ (\log N)^2$ and $\alpha >
-\log (1-e^{-1})$ with probability converging to 1, there exists an $i \ge c+1$ such that for all $j \le c$ there is a $\mu \ge 2$ such that $\xi_i^\mu \xi_j^\mu=1$.

After borrowing this statement from the proof of Theorem \ref{willshaw2} we can proceed as follows:
Taking into account that with overwhelming probability $c$ is larger than $(1-\delta)\log N$ for any $\delta >0$ and $N$ large enough, we see that in Amari's model for such an $i \ge c+1$
\begin{eqnarray*}
T_i(\xi^1) &=&  \Theta(\sum_{j \neq i} J_{ij}\xi^1_j-\gamma \log N)\\
          &=&  \Theta(\sum_{j \le c} J_{ij}-\gamma \log N) \\
           &\ge & \Theta((1-\delta) \log N-\gamma \log N)=1 \\
\end{eqnarray*}
if we choose $1-\delta >\gamma$. As we can choose $\delta>0$ arbitrarily small, with any threshold $\gamma \log N$ with $\gamma <1$ such a neuron will not be recovered correctly.
\end{proof}

\begin{proof}[Proof of Corollary \ref{willshaw1}]
The only thing one has to observe is that for each $i$ with $\xi_i^1=1$ we again have $T_i(\xi^1)=1$, because again $\sum_j\xi_j^1 \geq\gamma\log(N)$ {for any $\gamma <1$}.

On the other hand for {each} $i$ with $\xi_i^1=0$ we have that the probability that $\xi_i^1$ is turned into a 1 by the dynamics and thus not recovered correctly is given by $\P(\sum_j J_{ij} \xi^1_j \ge \gamma \log(N))$. Now,
$$
\sum_j J_{ij} \xi^1_j < \sum_j \xi^1_j\sum_{\mu \ge 2}  \xi_i^\mu \xi_j^\mu
$$
and the right hand side is the quantity considered in the previous proof. Thus the bound obtained {in the previous proof} is also a bound for the Willshaw model with threshold dynamics.
\end{proof}

\begin{rem}
\normalfont
Of course, the previous proof underestimates the storage capacity of the Willshaw model with threshold dynamics.
However, the difference between $J_{ij}$ and $\sum_{\mu \ge 2}  \xi_i^\mu \xi_j^\mu$ is not that huge. Indeed, for $M=\alpha N^2/(\log N)^2$ the latter is close to a Poisson random variable with parameter $\alpha$ and we will see in the next theorem, that even with a better performing dynamics we only reach a bound of $\alpha \le 0.45$.
\end{rem}

We continue with the Willshaw model with WTA dynamics.
\begin{proof}[Proof of Theorem \ref{willshaw2}]
We start with proving the third statement of the theorem. This will automatically yield the first part by setting $\rho$ to $0$.

Using the same method as in the proof of Theorem \ref{amari}, we can restrict the proof to the cases where ${c_1}$ neurons in the message $\xi^1$ are active, with $c_1 \in [(1- \varepsilon_1) c,  (1+ \varepsilon_1) c]$, for some small $\varepsilon_1>0$.
Assume that $f$ of the ``1''-bits in $\xi^1$ are erased and $k=c_1-f$ ``1''s are known.
Without loss of generality, we can assume that $\xi^1_i=1$ for $i\le c_1$ and $\xi^1_i=0$ for $i\ge c_1+1$.

Let $\tilde{\xi}^1\in \{0,1\}^N$ be a version of $\xi^1$ corrupted as described above, such that $\tilde{\xi}^1_i=1$ for $i\le k$ and $\tilde{\xi}^1_i=0$ for $i\ge k+1$.
We have trivially that,
$$
\displaystyle h_i(\tilde{\xi}^1)= \sum_{j=1}^k J_{ij},
$$
and thus $\displaystyle h_i(\tilde{\xi}^1)= k$ for all $i\le c_1$. Therefore $y=T(\tilde{\xi}^1)$ will satisfy $y_i=\xi^1_i$ for all $i\leq c_1$.

Thus recalling the WTA we see that $y \ne \xi^1$, if there exist $i\ge c_1+1$, such that for all $j \le k$ there exists $\mu\ge 2$ such that $\xi^\mu_i \xi^\mu_j=1$.

The probability of the latter event can be bounded as follows. Consider
\begin{eqnarray*}
&&\P[\exists i\ge c_1+1, \forall j\le k : \exists \mu\ge 2,  \xi_i^\mu \xi_j^\mu=1]\\
&\le&  N\sum_{l=0}^{M-1}  \sum_{\underset{ \mathrm{card}(I)=l}{ I\subset \{2,\ldots, M\} } } \P[\forall j\le k : \exists \mu\ge 2,  \xi_N^\mu \xi_j^\mu=1 \vert \xi_N^\mu=1 \Leftrightarrow\mu\in I]\P[ \xi_N^\mu=1 \Leftrightarrow\mu\in I]\\ &\le& N\sum_{l=0}^{M-1} \sum_I  \P[\forall j\le k : \exists \mu\in I,  \xi_j^\mu=1] \P[ \xi_N^\mu=1 \Leftrightarrow\mu\in I]\\
&=& N\sum_{l=0}^{M-1} \binom{M-1}{l}  ( 1- (1-\frac cN)^l)^k  (\frac cN)^l (1-(\frac cN))^{M-l-1}\\
&=&  N\sum_{l=0}^{M-1} \binom{M-1}{l}  \sum_{i=0}^k \binom{k}{i} (-1)^i (1-\frac cN)^{il} (\frac cN)^l (1-(\frac cN))^{M-l-1}\\
&=& N\sum_{i=0}^k \binom{k}{i} (-1)^i (1- \frac cN + \frac cN (1-\frac cN)^i)^{M-1}\\
\end{eqnarray*}
by elementary transformations.

Now we expand the term in the brackets and use the bound $1+x\le e^x$ for all $x$ to obtain
\begin{eqnarray*}
&&\P[\exists i\ge c_1+1, \forall j\le k : \exists \mu\ge 2,  \xi_i^\mu \xi_j^\mu=1]\\
&\le &N\sum_{i=0}^k \binom{k}{i} (-1)^i \left (1- i \left(\frac cN\right)^2+ \frac{i(i-1)}2 \left(\frac cN\right)^3+\mathcal{O}\left(i^3  \left(\frac cN\right)^4\right)\right)^{M-1}\\
&\leq& N\sum_{i=0}^k \binom{k}{i} (-1)^i \exp\left (- iM \left(\frac cN\right)^2+M \frac{i(i-1)}2 \left(\frac cN\right)^3+\mathcal{O}\left(M i^3  \left(\frac cN\right)^4\right)\right)\\
&=& N\sum_{i=0}^k \binom{k}{i} (-1)^i e^{- iM \left(\frac cN\right)^2}\left (1+M \frac{i(i-1)}2 \left(\frac cN\right)^3+\mathcal{O}\left(M i^3  \left(\frac cN\right)^4\right)\right)\\
&\le& N (1-e^{- M (\frac cN)^2})^k  +  MN(\frac cN)^3 \sum_{i=0}^k \binom{k}{i} (-1)^i e^{- iM (\frac cN)^2} \frac{i(i-1)}2\\
&& \qquad +N(1+e^{- M (\frac cN)^2})^k \mathcal{O}\left(M   k^3\left(\frac cN\right)^4\right)\\
\end{eqnarray*}
\begin{eqnarray*}
&=&N (1-e^{- M (\frac cN)^2})^k  +  MN(\frac cN)^3 e^{- 2M (\frac cN)^2}\frac{k(k-1)}2(1-e^{- M (\frac cN)^2})^{k-2}\\
&& \qquad +N(1+e^{- M (\frac cN)^2})^k \mathcal{O}\left(M   k^3\left(\frac cN\right)^4\right).
\end{eqnarray*}

If we choose
$$
M=\alpha (\frac Nc)^2  \quad \mbox{and  } k=(1-\rho) \log(N) \quad \mbox{for some } \rho\in [0,1[
$$
we arrive at
\begin{eqnarray*}
&&\P[\exists i\ge c_1+1, \forall j\le k : \exists \mu\ge 2,  \xi_i^\mu \xi_j^\mu=1] \\
&\le& N (1-e^{- \alpha})^{(1-\rho)  \log(N)}  +  \alpha  e^{- 2\alpha}(\log N)^3(1-e^{- \alpha})^{(1-\rho)  \log N-2}\\
&&\qquad +(1+e^{- \alpha})^{(1-\rho) \log(N)} \mathcal{O}(\frac {(\log N)^5} N).
\end{eqnarray*}
If $(1-\rho) \log(1-e^{- \alpha})<-1$, i.e.  $\alpha<-\log(1-e^{-1/(1-\rho) })$,  the first term converges to 0 and  the two last terms also vanish for $N\rightarrow\infty$. This gives
$$
\P[\exists i\ge c_1+1, \forall j\le k : \exists \mu\ge 2,  \xi_i^\mu \xi_j^\mu=1] \rightarrow 0
$$
as desired.

It remains to prove the reverse bound on the storage capacity. The considerations are similar to what we did above.
Now assume that $M \ge \alpha (\frac N c)^2$ for some $\alpha >0$ and again that $\xi^1$ has entries $\xi_i^1= 1$ for $i=1, \ldots c_1$ and  $\xi_i^1= 0$ for $i >c_1$.

Again consider
\begin{eqnarray*}
&&\P[\exists i\ge c_1+1, \forall j\le k : \exists \mu\ge 2,  \xi_i^\mu \xi_j^\mu=1]\\
&=& 1-\P\Big[\bigcap_{i \ge c_1+1} \{\exists j\le k : \forall \mu\ge 2,  \xi_i^\mu \xi_j^\mu=0\}\Big]\\
&=& 1-\P_{\{\xi_j^\mu, j \le k, \mu \ge 2\}} \prod_{i=c_1+1}^N  \P_{\{\xi_i^\mu, \mu \ge 2\}} \Big[\exists j\le k : \sum_{\mu\ge 2} \xi_i^\mu \xi_j^\mu=0\Big]\\ \end{eqnarray*}
by independence after conditioning (and the $\P_{\{\xi_j^\mu\}}$ denote the probabilities with respect to the corresponding random variables).
Now
\begin{eqnarray*}
&&\P_{\{\xi_i^\mu, \mu \ge 2\}} \Big[\exists j\le k : \sum_{\mu\ge 2} \xi_i^\mu \xi_j^\mu=0\Big]
= 1-\P_{\{\xi_i^\mu, \mu \ge 2\}} \Big[\forall j\le k : \sum_{\mu\ge 2} \xi_i^\mu \xi_j^\mu\ge 1\Big]
\end{eqnarray*}
Let $X_j :=\sum_{\mu \ge 2} \xi_i^\mu \xi_j^\mu$. We observe by similar arguments as in Section 3 that the $(X_j)$ are positively associated with respect to $\P_{\{\xi_i^\mu, \mu \ge 2\}}$.
Therefore, for $ i \ge c_1+1,$
$$
\P_{\{\xi_i^\mu, \mu \ge 2\}} [\forall j\le k : X_j\ge 1]\ge
\prod_{j=1}^k\left(\P_{\{\xi_i^\mu, \mu \ge 2\}} [X_j\ge 1]\right)
$$
which gives
\begin{eqnarray*}
&&\P[\exists i\ge c_1+1, \forall j\le k : \exists \mu\ge 2,  \xi_i^\mu \xi_j^\mu=1] \\ &\ge&  1-\P_{\{\xi_j^\mu, j \le k, \mu \ge 2\}} \prod_{i=c_1+1}^N  \left(1-\prod_{j=1}^k(\P_{\{\xi_i^\mu, \mu \ge 2\}} [X_j\ge 1])\right)
\end{eqnarray*}
To compute the right hand side take e.g. $i=N$. Then for all $j\le k,$
\begin{eqnarray*}
\P_{\{\xi_N^\mu, \mu \ge 2\}} [X_j\ge 1]) &=& 1-\P_{\{\xi_N^\mu, \mu \ge 2\}} \left(\sum_{\mu=1}^M \xi_N^\mu \xi_j^\mu =0\right)\\
&=&  1-\prod_{\mu: \xi_j^\mu=1} \P_{\{\xi_N^\mu, \mu \ge 2\}} \left(\xi_N^\mu=0\right)\\
&=& 1-\left(1-\frac c N\right)^{W_j},\\
\end{eqnarray*}
where $W_j:= \sum_{\mu=1}^M \xi_j^\mu.$
With overwhelming probability
$$
W_j \in \Big[(1-\vep) \frac{Mc}N,(1+\vep) \frac{Mc}N\Big]
$$
for all $N$ large enough, for all $j\le k$. More precisely, for all $\vep>0$, $k= C \log(N)$, with $C>0$,
$$
\P\Big[\forall j\le k : W_j\in \Big[(1-\vep) \frac{Mc}N,(1+\vep)\frac{Mc}N\Big]\Big] \ge 1-  2C \log(N) e^{-\frac{Mc\vep^2}{2N}}.
$$
This justifies that we can restrict to these cases, and putting things together, we obtain for $M = \alpha (\frac N c)^2$ that
\begin{eqnarray*}
&&\P[\exists i\ge c_1+1, \forall j\le k : \exists \mu\ge 2,  \xi_i^\mu \xi_j^\mu=1]
\ge 1-\left(1-\left(1-e^{-\alpha}\right)^k\right)^{N-k_1}.
\end{eqnarray*}
The right hand side converges to 1 if $\left(1-\left(1-e^{-\alpha}\right)^k\right)^N$ goes to 0, which is the case if and only if
\begin{eqnarray*}
N \log \left(1-\left(1-e^{-\alpha}\right)^k\right) &\approx & -N \left(1-e^{-\alpha}\right)^k\\
&=& -N \exp\left( k \log(1-e^{-\alpha})\right)\\
&=& -N^{1+(1-\rho) \log (1-e^{-\alpha})} \to -\infty.
\end{eqnarray*}
This is true if $1+(1-\rho) \log (1-e^{-\alpha}) >0$, which is true if and only if
$$
\alpha > -\log (1-e^{-1/(1-\rho) }).
$$
This finishes the proof.
\end{proof}

\begin{rem}
Note that the previous proof reveals that not only we have upper and lower bounds on the storage capacity of the Willshaw model with WTA dynamics, but also that these bounds match.
Such matching bounds can very rarely be proven. The only other model we are aware of where this is the case, is the Hopfield model (see \cite{MPRV} and \cite{Bov98}).
\end{rem}

\begin{proof}[Proof of Theorem \ref{GB-WTA}]
The decisive observation here is that the GB model is ``almost'' a Willshaw model. As a matter of fact, as stated already in the description of the model in Section 2,  the only difference is, that in the GB model there is a restriction on the location of the 1's. However, if we analyze the proof of Theorem \ref{willshaw2}, we find that the dynamics is in a sense ''non-spatial'', i.e. a neuron is getting signals from all the other neurons in both of these models. Thus this detail does not influence the proof.

This observation, however, also raises the question, whether we can also prove the third statement in Theorem \ref{willshaw2} for the GB model. It is, indeed, natural to conjecture that a similar statement holds true. However, in the proof of part 3 of Theorem \ref{willshaw2} we make use of positive association. This property enters the proof in the Willshaw model, because with our setting we are having increasing functions of i.i.d. random variables (the spins $\xi_i^\mu$), that are indeed positively associated. In the GB model, the extra condition that each pattern has exactly one 1 in each of the blocks implies that for each fixed $\mu$ the random variables $\xi_{(a,k)}^\mu$ are no longer independent. Hence we do not have positive association.  
%
%
% with an extra condition on the location of the 1's. This makes it possible to almost literally repeat the proof of Theorem \ref{willshaw2}. The details are left to the reader.
\end{proof}

%**

\section{The wrong message revisited -- a limit of all reconstruction techniques}
In this section we return to the question addressed in Section 3. There we showed that in the GB model with $M$ too large a wrong message will be recognized with large probability as a correct one, which limits the confidence we can have into our associative memory.

A very similar consideration shows that we cannot reconstruct erased messages in the GB model, if $M$ is too large. Indeed, in the GB model suppose we delete at random a proportion of $(1-\rho)c$ of active bits of a given message. If the remaining bits can be completed in more than one way to a message that is recognized by the system (N.B. not necessarily a message that is stored in the network), there is no way whatsoever, a reconstruction algorithm could find the correct message with probability one.

Using ideas from Section 3 one can prove a theorem on the probability to complete an erased message by a message on a given set of neurons. To formulate it, suppose that a message $\xi^1$ is stored in the network. Without loss of generality $\xi^1_{(a,1)}=1$ for all clusters $1 \le a \le c$ and all the other bits are 0. Assume we keep the $\xi^1_{(a,1)}=1$ for the clusters $1 \le a \le \rho c$, $0 <\rho < 1$ and set all other neurons to 0. Then for each cluster $\rho c+1 \le a \le c$ we choose a neuron $(a,i), 2\le i \le l$ and set it to 1. Let $G$ be the event that the message $\zeta$ having 1's in position $(a,1), 1 \le a \le \rho c$ and $(a,i)$ for $\rho c+1 \le a \le c$ is recognized by the system as a stored message.

\begin{theorem}  \label{subclique}
Suppose that in the GB model we store $M= \alpha l^2 \log c$ messages. Then  $\P(G)$ tends to 0 if and only if $\alpha<2$.
\end{theorem}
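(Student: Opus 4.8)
The plan is to mimic the structure of the proof of Theorem~\ref{Thsection3}, replacing the complete graph on the ``first'' neurons of all $c$ clusters by the graph relevant to the completion event $G$. Write $\zeta$ for the candidate completed message, with active neurons $v_a:=(a,1)$ for $a\le \rho c$ and $v_a:=(a,i_a)$ for $\rho c+1\le a\le c$; since $\xi^1$ is already stored, the $\binom{\rho c}{2}$ edges inside the ``known'' part are automatically present, so $G$ is the event that every one of the remaining edges of the complete graph on $\{v_1,\dots,v_c\}$ — namely the $c(c-1)/2-\rho c(\rho c-1)/2$ edges having at least one endpoint among $v_{\rho c+1},\dots,v_c$ — is active in $\mathcal E((\xi^\mu)_{\mu\le M})$. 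As in Section 3, for an edge $e=(v_a,v_b)$ set $X_e=\max_{\mu\le M}\xi^\mu_{v_a}\xi^\mu_{v_b}$; these are increasing functions of the independent Bernoulli$(1/l)$ variables $\xi^\mu_{v_a}$ (note each relevant neuron $v_a$ appears in a genuine $0$--$1$ i.i.d.\ family since we only ever look at one neuron per cluster), hence positively associated, and $\P(X_e=1)=d:=1-(1-1/l^2)^M$.

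For the lower bound on $\P(G)$ when $\alpha<2$ — i.e.\ to show $\P(G)\to0$ fails to be the conclusion, so actually we want $\alpha<2\Rightarrow\P(G)\to 0$? — let me restate: the theorem says $\P(G)\to0$ iff $\alpha<2$. So for $\alpha\ge 2$ I must show $\P(G)\not\to0$, and for $\alpha<2$ show $\P(G)\to0$. For the first direction I use positive association downward: with $L$ the number of edges to be checked,
\[
\P(G)\ge d^{L},
\]
and since $L\le c(c-1)/2\sim c^2/2$ and $d\approx e^{-\alpha\log c}=c^{-\alpha}$, we get $d^{L}\gtrsim\exp(-\tfrac{c^2}{2}c^{-\alpha})$, which stays bounded away from $0$ as soon as $\alpha\ge 2$ (tending to $1$ for $\alpha>2$, to a positive constant for $\alpha=2$). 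For the converse, $\alpha<2$, I run the Boutsikas bound exactly as in Theorem~\ref{Thsection3}: with $Y_e=1-X_e$,
\[
\P(G)=\P\Big[\textstyle\sum_{e}Y_e=0\Big]\le d^{L}+\sum_{e,e'}\operatorname{Cov}(X_e,X_{e'}).
\]
The covariance computation is literally the one already done: $\operatorname{Cov}(X_e,X_{e'})=0$ unless $e,e'$ share a vertex, and for a shared vertex it equals $\exp(-2M/l^2)\big(M/l^3+\mathcal O(M/l^4)\big)$. The number of incident pairs is $\mathcal O(c^3)$, so $\sum_{e,e'}\operatorname{Cov}\lesssim c^3 e^{-2\alpha\log c}M/l^3=\alpha(\log c)c^3 c^{-2\alpha}/l$, which $\to0$ for every fixed $\alpha$ (there is an extra factor $1/l$ to spare, just as in Section 3). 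And $d^{L}\to0$ for $\alpha<2$ because $L\sim c^2/2$ and $d\approx c^{-\alpha}$ give $d^L\approx\exp(-\tfrac12 c^{2-\alpha})\to0$. Combining, $\P(G)\to0$ when $\alpha<2$, which with the previous paragraph proves the equivalence.

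The only genuinely new point relative to Theorem~\ref{Thsection3} — and the step I expect to need the most care — is the bookkeeping of $L$, the number of edges that actually need to be checked. Because the $\rho c$ ``known'' neurons are at fixed positions already belonging to a stored message, the $\binom{\rho c}{2}$ internal edges contribute probability $1$ and must be excised from the product; what remains is $L=\binom{c}{2}-\binom{\rho c}{2}=\tfrac12\big((1-\rho^2)c^2+\mathcal O(c)\big)$. The dominant exponential is therefore $d^{L}\approx\exp\!\big(-\tfrac{1-\rho^2}{2}\,c^{2-\alpha}\big)$, which still crosses over precisely at $\alpha=2$ — the factor $(1-\rho^2)/2$ changes the constant inside the stretched exponential but not the threshold — so the statement of the theorem (threshold independent of $\rho$) comes out correctly. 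A minor subtlety is that a handful of covariance terms involve one ``known'' vertex and one ``new'' vertex, or two ``new'' vertices in different clusters; all of these are still governed by the same formula since positivity of association and the per-neuron Bernoulli structure are unaffected, and their count is still $\mathcal O(c^3)$, so the error term is unchanged. Finally, as in the previous proofs, one should first condition on the (overwhelmingly likely) event that $|\mathcal M(a,1)|$ and the analogous cardinalities are within $(1\pm\epsilon)M/l$ of their means, or simply observe, as there, that this is not even needed because the exact computations go through; I would follow the paper's style and do the exact computation.
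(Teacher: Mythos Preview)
Your proof is correct and follows essentially the same route as the paper's sketch: identify the $L=\binom{c}{2}-\binom{\rho c}{2}\sim\tfrac{1-\rho^2}{2}c^2$ remaining edges, bound $\P(G)$ below by $d^L$ via positive association and above by $d^L$ plus the Boutsikas covariance sum, and observe that the crossover at $\alpha=2$ is unaffected by the factor $1-\rho^2$. One small slip to fix: you twice write ``$d\approx c^{-\alpha}$'' when what you use (and what is true) is $1-d=(1-1/l^2)^M\approx c^{-\alpha}$; your subsequent estimates $d^L\approx\exp(-\tfrac{1-\rho^2}{2}c^{2-\alpha})$ are consistent with the correct version, so this is only a notational typo.
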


\begin{proof}
We only sketch the proof here as it is almost identical to the considerations in Section 3.

Other than there, we already know $\rho c$ bits of $\zeta$ are correct. Hence we only need to find messages that are active on the remaining $r(c,\rho):=\rho (1-\rho) c^2+(1-\rho)c ((1-\rho)c-1)/2 =\frac {c^2} 2(1-\rho^2)-\frac{1}{2}c(1-\rho)$ edges. % \notejud{I get a number of $\frac{c^2}{2}(1-\rho^2)-\frac{1}{2}c(1-\rho)$ edges.

Positive association bounds thus $\P(G)$ by $(1-(1-\frac 1 {l^2})^M)^{r(c,\rho)}=:d^{~r(c,\rho)}$ from below. The same exponential inequality as in Section 3 also shows an upper bound for $\P(G)$ by $d^{~r(c,\rho)}$ plus a vanishing term. Replacing $(1-\frac 1 {l^2})^M$ by $c^{-\alpha}$ we thus see that $d^{~r(c,\rho)}$ is of order $\exp(-\frac{c^{2-\alpha}}{2}(1-\rho^2))$ and therefore goes to zero, if and only if, $c^{2-\alpha}  (1-\rho^2) \to \infty$.
\end{proof}

\begin{rem}\normalfont
Similarly to Theorem \ref{Thsection3}, we get that $\P(G)$ is well approximated by $d^{\frac {c^2} 2(1-\rho^2)}$, when the latter goes to 0, for $\alpha\in ]1,2[$.
This is not the case for $\alpha\in ]0,1[$, since  the additive error term in the upper bound vanishes, but  slower than $d^{\frac {c^2} 2(1-\rho^2)}$.
\end{rem}

\section{Dynamical properties of the models}
An interesting question is the convergence of the proposed dynamics. Recall that we distinguish two types of dynamics: a) fixed threshold ones where $h$ is fixed a priori and b) varying threshold ones where $h$ is updated at each iteration of the dynamics (e.g. WTA). Note that in all cases we consider the memory effect described in Section 2.2.

Let us first consider the Willshaw model.

\subsection{Willshaw model}

In this section we show the following results:
\begin{enumerate}
\item Choosing a fixed $h$ forces convergence of the dynamics,
\item Choosing a varying $h$ can lead to oscillations in the dynamics,
\item Choosing the threshold $h_{(1)}$ as defined in Section 2, performance does not benefit from iterating more than once the dynamics.
\end{enumerate}

Note that the major interest of varying thresholds is that they lead to better performance as illustrated in Section~\ref{sec:simulations}. There thus exists a tradeoff between performance and convergence guarantees for the Willshaw model.

\begin{theorem}
  Choosing a fixed threshold $h$ forces the dynamics to converge.
  \label{thm:convergence}
\end{theorem}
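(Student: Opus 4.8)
The plan is to exhibit a Lyapunov (energy) function that is non-increasing along the fixed-threshold Willshaw dynamics and takes only finitely many values, so that the dynamics must stabilize. The natural candidate, borrowed from the Hopfield-type analysis, is
\[
H(\sigma) = -\tfrac12 \sum_{i,j} J_{ij}\sigma_i\sigma_j + h\sum_i \sigma_i,
\]
where we use $\bar S_i(\sigma)=\sum_j J_{ij}\sigma_j$ (the memory effect means the self-term $J_{ii}$ is included, but since $J_{ii}\in\{0,1\}$ and $\sigma_i^2=\sigma_i$ this only shifts $H$ by a bounded amount and does not affect the argument). Since $\sigma\in\{0,1\}^N$, the function $H$ takes only finitely many values, so it suffices to show $H$ is non-increasing under $T$ and strictly decreasing whenever $\sigma$ is not a fixed point.

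First I would treat the asynchronous update: suppose a single coordinate $i$ is updated, replacing $\sigma_i$ by $\sigma_i' = \Theta(\bar S_i(\sigma)-h)$, and all other coordinates are unchanged. A direct computation gives
\[
H(\sigma') - H(\sigma) = -(\sigma_i'-\sigma_i)\Bigl(\bar S_i(\sigma) - h\Bigr) - \tfrac12 J_{ii}(\sigma_i'-\sigma_i)^2 + (\text{correction from } J_{ii}),
\]
and the key point is that, by the definition of $\Theta$, the sign of $\sigma_i'-\sigma_i$ always matches the sign of $\bar S_i(\sigma)-h$ (if $\bar S_i(\sigma)\ge h$ the update sets $\sigma_i'=1\ge\sigma_i$, and if $\bar S_i(\sigma)<h$ it sets $\sigma_i'=0\le\sigma_i$). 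Hence $(\sigma_i'-\sigma_i)(\bar S_i(\sigma)-h)\ge 0$, and one checks the residual $J_{ii}$-terms do not spoil the inequality because $J_{ii}\ge 0$ and the update rule together with $\Theta$'s convention at $0$ can be arranged to keep the whole increment $\le 0$; it is $<0$ whenever $\sigma_i'\ne\sigma_i$. Since $H$ is bounded below and decreases by a positive amount bounded away from $0$ (the increments live in a finite set) at each genuine change, only finitely many coordinates can ever change, so the dynamics reaches a fixed point.

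For the synchronous update one has to be slightly more careful, since several coordinates flip at once and the naive energy need not decrease. The standard remedy (as for synchronous Hopfield dynamics) is to pass to the "doubled" bipartite system and use the auxiliary function $\widetilde H(\sigma,\tau) = -\sum_{i,j}J_{ij}\sigma_i\tau_j + h\sum_i\sigma_i + h\sum_i\tau_i$, which is non-increasing when one alternately updates all $\sigma$'s (using $\tau$) and all $\tau$'s (using $\sigma$); this shows the synchronous dynamics converges to a cycle of period at most $2$, and one then argues that with the memory effect and threshold $h$ the period-$2$ cycles are in fact fixed points, or simply states convergence up to $2$-cycles if that is all the theorem claims. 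The main obstacle is precisely this synchronous case: verifying that no genuine $2$-cycles occur (or adjusting the statement to allow them), and handling the bookkeeping of the self-interaction term $J_{ii}$ introduced by the memory effect so that the energy inequality is strict at every real transition. Everything else is a routine finiteness-plus-monotonicity argument.
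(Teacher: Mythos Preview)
Your energy-function strategy is correct in spirit for the asynchronous case, but it diverges substantially from the paper's argument and leaves exactly the gap you yourself flag in the synchronous case.

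The paper does \emph{not} use a Lyapunov function at all. Instead it restricts attention to the intended setting---the input $\tilde\xi^\mu$ is a partially erased stored message---and shows that the sequence of active sets $a^\mu(t)=\{i:\tilde\xi^\mu_i(t)=1\}$ is \emph{nondecreasing} with respect to inclusion. The base case $a^\mu(0)\subseteq a^\mu(1)$ holds because all initially active neurons belong to a stored message and are therefore pairwise $J$-connected, so each of them already achieves score $c_\mu\ge h$; the induction step is immediate since $J\ge 0$ implies that enlarging the active set can only increase every $\bar S_i$. Monotonicity plus finiteness of $\{0,1\}^N$ gives convergence directly, and it handles the synchronous dynamics at once with no $2$-cycle issue.

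Your approach, by contrast, is more general in ambition (it would cover arbitrary inputs and asynchronous updates), but this generality is precisely what forces you into the Goles--Bruck bipartite trick for the synchronous case, and there you only get period-at-most-$2$. You are right that ruling out genuine $2$-cycles is the obstacle; for arbitrary $\{0,1\}$ inputs with nonnegative $J$ and a fixed threshold, $2$-cycles can in fact occur, so the obstacle is not removable at that level of generality. The paper avoids it entirely by exploiting the clique structure of the input, which makes the dynamics monotone rather than merely energy-decreasing. So the missing idea in your proposal is this monotonicity observation; once you have it, the whole energy machinery becomes unnecessary.
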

\begin{proof}
Let us consider an input pattern $\tilde{\xi}^\mu$ where some 1s have been erased. Denote $c_\mu=\|\tilde{\xi}^\mu\|_0$ to be the number of 1's in $\tilde{\xi}^\mu$. Then it is immediate that if $h>c_\mu$ the dynamics converges in one iteration to a null vector.

On the other hand, let us introduce the sequence $\left(\tilde{\xi}^{\mu}(t)\right)_{t\geq 0}$:

\begin{eqnarray*}
\tilde{\xi}^\mu(0) &:=& \tilde{\xi}^\mu\\
 \tilde{\xi}^\mu(t+1) &:=& T\left(\tilde{\xi}^\mu(t)\right) \qquad \text{and for all } t \in \mathbb{N},
\end{eqnarray*}
and the sequence $\left(a^{\mu}(t)\right)_{t \geq 0}$ such that $a^\mu(t) = \{i, \tilde{\xi}^\mu_i(t) = 1\}$ for all $t \in \mathbb{N}_0$.

We now can show the following proposition:
\begin{proposition}
If $h\leq c_\mu$, the sequence $\left(a^{\mu}(t)\right)_{t \geq 0}$ is nondecreasing with respect to inclusion.
\end{proposition}
\begin{proof}
  Let us proceed by  induction.

  First we have trivially that $a^\mu(0)\subseteq a^\mu(1)$. This is due to the fact that $\forall i,j\in a^\mu(0), J_{ij}=1$.

  Then let us suppose that for some $t$ we have $a^\mu(t)\subseteq a^\mu(t+1)$. By definition, $\forall i\in a^\mu(t+1)$, we have $\#\{j \in a^\mu(t), J_{ij} = 1\} \geq h$, where $\#$ denotes the cardinality operator.

  Since $a^\mu(t)\subseteq a^\mu(t+1)$, it also holds that $\#\{j \in a^\mu(t+1), J_{ij} = 1\} \geq h$ and we conclude that $a^\mu(t+1)\subseteq a^\mu(t+2)$.
\end{proof}

A direct corollary is that $\left(a^{\mu}(t)\right)_{t \geq 0}$ converges.
\end{proof}

\begin{theorem}
  Choosing a varying $h$ can lead to oscillations in the dynamics of the Willshaw model.
\end{theorem}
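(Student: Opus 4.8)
The plan is to exhibit an explicit small configuration in which one step of the Willshaw WTA dynamics (with the varying threshold $h_{(c)}$ and the memory effect) maps a set $A$ of active neurons to a set $B$, and then a further step maps $B$ back to $A$, so that the dynamics is $2$-periodic and never converges. Since the theorem only claims existence, it suffices to describe a concrete synaptic matrix $J$ (equivalently, a concrete family of stored messages inducing it) together with an initial input $\tilde\xi$.

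First I would fix a convenient value of $c$ (the number of neurons we force to be active after each WTA step, e.g. $c=2$ or $c=3$) and work with a graph on few vertices — say $N=4$ or $5$ — specifying directly which pairs $\{i,j\}$ have $J_{ij}=1$; one checks afterwards that any such $0$--$1$ symmetric matrix with $1$'s on the diagonal of the support is realizable by a list of Willshaw messages (take one message per edge). Then I would pick an input $A=a^\mu(0)$ with $|A|=c$ sitting on a clique, compute $\bar S_i(\sigma)=\sum_j J_{ij}\sigma_j$ for every $i$, order these values, and apply the rule ``set to $1$ exactly the neurons attaining the top value, i.e.\ those with $\bar S_i\ge h_{(c)}$''. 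The construction should be arranged so that the top-$c$ scores are attained by a \emph{different} clique-like set $B\neq A$ (this is where the memory effect matters: without it the originating set is always pinned at the maximal score, which is precisely the phenomenon behind Theorem~\ref{thm:stuck1iteration}, so one must use $\bar S$, not $S$, and choose $J$ so that some vertex outside $A$ beats the vertices of $A$). Finally I would verify that applying the same rule to $B$ returns $A$, giving the oscillation $A\to B\to A\to\cdots$.

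Concretely I would aim for something like $c=2$ on vertices $\{1,2,3,4\}$ with edges making $\{1,2\}$ and $\{3,4\}$ each into "strong" pairs while cross edges are tuned so that starting from $\{1,2\}$ the scores favour $\{3,4\}$ and vice versa; if a clean $2$-cycle cannot be produced at $N=4$ I would enlarge to $N=5$ or raise $c$. The write-up would then just state $J$, tabulate the scores $\bigl(\bar S_i(\mathbf 1_A)\bigr)_{i=1}^N$ and $\bigl(\bar S_i(\mathbf 1_B)\bigr)_{i=1}^N$, read off $h_{(c)}$ in each case, and conclude $T(\mathbf 1_A)=\mathbf 1_B$, $T(\mathbf 1_B)=\mathbf 1_A$.

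The main obstacle is not difficulty but bookkeeping: one has to choose the edge set so that (i) it is consistent with the Willshaw storage rule (automatic, since every symmetric $0$--$1$ matrix is some $J$), (ii) the WTA tie-breaking does not accidentally collapse $B$ back onto $A\cup B$ or onto a fixed point, and (iii) the memory-effect term $J_{ii}\sigma_i$ is handled correctly so that the ``incumbent'' neurons do not automatically dominate. I expect a couple of trial configurations will be needed before a genuinely period-$2$ example appears, and the only real care is in checking that the chosen $h_{(c)}$ threshold produces exactly the intended output set (including when there are ties at the $c$-th largest score).
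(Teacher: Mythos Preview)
Your overall strategy---produce one explicit small example and tabulate the scores---is exactly what the paper does. The concrete plan, however, contains a genuine error: you have the role of the memory effect reversed. With $\bar S_i=\sum_j J_{ij}\sigma_j$ (self-term included), every neuron $i$ in a clique $A$ of size $c$ achieves score exactly $c$, which is the maximum any neuron can get from $c$ active vertices; a vertex outside $A$ can therefore at best \emph{tie}, never ``beat'', the vertices of $A$. Consequently $A\subseteq T(\mathbf 1_A)$ always holds under the $h_{(c)}$-WTA rule when $A$ is a clique, and a $2$-cycle of the shape $\{1,2\}\leftrightarrow\{3,4\}$ that you are aiming for is impossible. (It is precisely \emph{without} the memory effect that incumbents score only $c-1$ and can be strictly beaten by an outside vertex adjacent to all of $A$---the opposite of what you wrote.)

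The paper avoids this obstruction by using the threshold $h_{(1)}$ rather than $h_{(c)}$, and by starting from a \emph{partially erased} message with a single active neuron on $N=5$, $c=2$: the edges are $\{1,2\},\{1,3\},\{1,4\},\{2,5\},\{3,5\},\{4,5\}$, and one gets the oscillation $\{1\}\to\{1,2,3,4\}\to\{1\}$ by direct score computation. If you wish to keep $h_{(c)}$, an oscillation does exist but must be of the nested form $A\leftrightarrow B$ with $A\subsetneq B$: for instance $N=4$, $c=2$, $J$ equal to $K_4$ minus the edge $\{3,4\}$, and $A=\{1,2\}$ gives $\{1,2\}\leftrightarrow\{1,2,3,4\}$. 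Either route reduces to a two-line tabulation once the right configuration is chosen; the only real gap in your proposal is the expectation that the initial clique can be pushed off the top of the score list.
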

\begin{proof}To illustrate this property, we propose an example where $N=5$ and $c=2$. We choose the threshold $h_{(1)}$ as defined in Section 2. Let us consider that:
\[
\left(\xi^\mu\right)_{1\leq \mu \leq 6} = \left(\left(\begin{array}{c}1\\1\\0\\0\\0\end{array}\right)
  \left(\begin{array}{c}1\\0\\1\\0\\0\end{array}\right)
    \left(\begin{array}{c}1\\0\\0\\1\\0\end{array}\right)
      \left(\begin{array}{c}0\\1\\0\\0\\1\end{array}\right)
        \left(\begin{array}{c}0\\0\\1\\0\\1\end{array}\right)
          \left(\begin{array}{c}0\\0\\0\\1\\1\end{array}\right)\right).
            \]

Consider the input:
\[
\tilde{\xi}^\mu(0) = \left(\begin{array}{c}1\\0\\0\\0\\0\end{array}\right)\;.
\]

One can easily check that:
\[
\left(\tilde{\xi}^\mu(t)\right)_{0\leq t \leq 3} = \left(\left(\begin{array}{c}1\\0\\0\\0\\0\end{array}\right)
  \left(\begin{array}{c}1\\1\\1\\1\\0\end{array}\right)
    \left(\begin{array}{c}1\\0\\0\\0\\0\end{array}\right)
      \left(\begin{array}{c}1\\1\\1\\1\\0\end{array}\right)
\right)\;,
\]
and thus $\tilde{\xi}^\mu(2) = \tilde{\xi}^\mu(0)$.
\end{proof}

The same dynamics is illustrated in Figure~\ref{fig:convergence}.

\begin{figure}
  \begin{centering}
    \begin{tikzpicture}
      \tikzstyle{node} = [draw,circle];
      \node at (1,2) {$t = 0$};
      \node[node](1)[fill] at (0,0) {};
      \node[node](2) at (1,1) {};
      \node[node](3) at (1,0) {};
      \node[node](4) at (1,-1) {};
      \node[node](5) at (2,0) {};
      \path
      (1) edge (2)
      edge (3)
      edge (4)
      (5) edge (2)
      edge (3)
      edge (4)
      (1) edge[loop left] (1)
      (2) edge[loop above] (2)
      (3) edge[loop above] (3)
      (4) edge[loop below] (4)
      (5) edge[loop right] (5);
      \begin{scope}[xshift=4cm]
        \node at (1,2) {$t = 1$};
        \node[node](1)[fill] at (0,0) {};
        \node[node](2)[fill] at (1,1) {};
        \node[node](3)[fill] at (1,0) {};
        \node[node](4)[fill] at (1,-1) {};
        \node[node](5) at (2,0) {};
        \path
        (1) edge (2)
        edge (3)
        edge (4)
        (5) edge (2)
        edge (3)
        edge (4)
        (1) edge[loop left] (1)
        (2) edge[loop above] (2)
        (3) edge[loop above] (3)
        (4) edge[loop below] (4)
        (5) edge[loop right] (5);
      \end{scope}
      \begin{scope}[xshift=8cm]
        \node at (1,2) {$t = 2k, k\geq 1$};
        \node[node](1)[fill] at (0,0) {};
        \node[node](2) at (1,1) {};
        \node[node](3) at (1,0) {};
        \node[node](4) at (1,-1) {};
        \node[node](5) at (2,0) {};
        \path
        (1) edge (2)
        edge (3)
        edge (4)
        (5) edge (2)
        edge (3)
        edge (4)
        (1) edge[loop left] (1)
        (2) edge[loop above] (2)
        (3) edge[loop above] (3)
        (4) edge[loop below] (4)
        (5) edge[loop right] (5);
      \end{scope}
      \begin{scope}[xshift=12cm]
        \node at (1,2) {$t = 2k+1,k\geq 1$};
        \node[node](1)[fill] at (0,0) {};
        \node[node](2)[fill] at (1,1) {};
        \node[node](3)[fill] at (1,0) {};
        \node[node](4)[fill] at (1,-1) {};
        \node[node](5) at (2,0) {};
        \path
        (1) edge (2)
        edge (3)
        edge (4)
        (5) edge (2)
        edge (3)
        edge (4)
        (1) edge[loop left] (1)
        (2) edge[loop above] (2)
        (3) edge[loop above] (3)
        (4) edge[loop below] (4)
        (5) edge[loop right] (5);
      \end{scope}
    \end{tikzpicture}
  \end{centering}
  \caption{Illustration of the oscillation of the dynamics when using WTA with the Willshaw model. Here the model contains $N=5$ neurons and the number of 1s in stored messages is $c=2$.}
  \label{fig:convergence}
\end{figure}
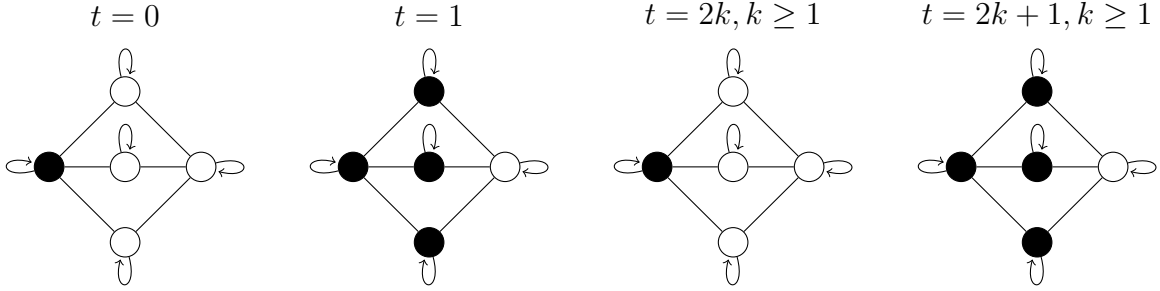

More generally, using the threshold $h_{(1)}$ as defined in Section 2, the performance of the model does not benefit from using more than one iteration, as expressed in the following theorem:
\begin{theorem}
  Consider a Willshaw network where the threshold is chosen as the maximum of the achieved scores ($h_{(1)}$). Choose as input a partially (but not completely) erased version $\tilde{\xi}^\mu$ of a stored message $\xi^\mu$. Then the dynamics converges  if and only if it converges in one step. In particular, it can only converge to $\xi^\mu$ if it does so in one iteration.
  \label{thm:stuck1iteration}
\end{theorem}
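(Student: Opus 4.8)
The plan is to analyze the structure of one step of the WTA dynamics with threshold $h_{(1)}$, exploiting the memory effect. First I would record the key observation already used in Section 2.2: when the input $\tilde\xi^\mu$ is a partially erased version of a stored message $\xi^\mu$ with $\|\tilde\xi^\mu\|_0 = k \ge 1$ active bits, every neuron $j$ that is active in $\tilde\xi^\mu$ satisfies $\bar S_j(\tilde\xi^\mu) = k$, since $J_{j\ell}=1$ for all pairs $j,\ell$ lying in the complete graph spanned by $\xi^\mu$. On the other hand, no neuron can achieve a score exceeding $k$, because $\bar S_i(\tilde\xi^\mu) = \sum_{\ell} J_{i\ell}\tilde\xi^\mu_\ell \le \sum_{\ell}\tilde\xi^\mu_\ell = k$. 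Hence $h_{(1)} = k$, and the set $a^\mu(1)$ of neurons set to $1$ after one step is precisely $\{ i : \bar S_i(\tilde\xi^\mu) = k \}$, which in particular contains all of $a^\mu(0)$. So after one iteration the active set only grows or stays the same, and it already contains the support of $\xi^\mu$.

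Next I would show that one step is idempotent on the image of $T$, i.e. $T(T(\tilde\xi^\mu)) = T(\tilde\xi^\mu)$. Write $\eta := T(\tilde\xi^\mu)$ and let $k' = \|\eta\|_0 \ge k \ge 1$ be its number of active bits. Every neuron $i$ active in $\eta$ was, by definition, connected (via $J$) to \emph{all} $k$ neurons active in $\tilde\xi^\mu$. I claim that the neurons active in $\eta$ are in fact pairwise connected: if $i, i'$ are both in $a^\mu(1)$, then since the support of $\xi^\mu$ is contained in $a^\mu(0)$, both $i$ and $i'$ are connected to every neuron of the complete graph spanned by $\xi^\mu$; but then for the message $\xi^\mu$ itself the pair $(i,i')$\ldots this needs a little care, so instead I would argue directly: each $i \in a^\mu(1)$ is connected to every active bit of $\tilde\xi^\mu$, and the set $a^\mu(1)$, being a nondecreasing image under $T$ by Proposition preceding Theorem~\ref{thm:convergence} (applied with the varying threshold, which here equals a constant $k$ on this orbit), satisfies $a^\mu(1) \subseteq a^\mu(2)$. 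Combined with the reverse inclusion $a^\mu(2) \subseteq a^\mu(1)$ — which holds because $h_{(1)}$ at step $1$ equals $k' \ge k$, and any neuron scoring $k'$ against $\eta$ already scored at least \ldots — one concludes $a^\mu(1) = a^\mu(2)$, i.e. $\eta$ is a fixed point.

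Finally I would assemble the statement. If the dynamics converges, it converges to some fixed point $\zeta$; but the sequence $(a^\mu(t))_{t\ge1}$ is constant from $t=1$ on by the idempotency just established, so in fact $\tilde\xi^\mu(1) = \tilde\xi^\mu(2) = \cdots$, meaning convergence already occurred at the first step. Conversely, convergence in one step is trivially convergence. For the last sentence: if $\tilde\xi^\mu(t) \to \xi^\mu$, then by the above the limit is attained at $t=1$, so $T(\tilde\xi^\mu) = \xi^\mu$. The main obstacle is the second step — proving the reverse inclusion $a^\mu(2) \subseteq a^\mu(1)$, i.e. that passing from $\tilde\xi^\mu$ to $\eta = T(\tilde\xi^\mu)$ does not activate any genuinely new neuron at the next step. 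The cleanest way is to observe that $\bar S_i(\eta) = \#\{ j \in a^\mu(1) : J_{ij} = 1 \}$ and that the maximizers of this over $i$ are exactly $a^\mu(1)$ (each such $i$ achieves $\bar S_i(\eta) = k' = |a^\mu(1)|$, and no $i \notin a^\mu(1)$ can, since it fails to connect to at least one neuron of $a^\mu(0) \subseteq a^\mu(1)$); this pins $h_{(1)}$ at the second step to $k'$ and forces $a^\mu(2) = a^\mu(1)$.
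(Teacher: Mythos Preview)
Your argument contains a genuine gap. You attempt to prove that $T$ is idempotent after one step, i.e.\ that $a^\mu(2)=a^\mu(1)$ always holds. This is false: the oscillation example given in the paper (with $N=5$, $c=2$) already shows a case where $a^\mu(1)=\{1,2,3,4\}$ but $a^\mu(2)=\{1\}\neq a^\mu(1)$. So the statement you are trying to establish is strictly stronger than the theorem and is simply not true.

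The specific mistake is in your ``cleanest way'' paragraph: you assert that each $i\in a^\mu(1)$ achieves $\bar S_i(\eta)=k'=|a^\mu(1)|$. This would require every pair $i,i'\in a^\mu(1)$ to satisfy $J_{ii'}=1$, i.e.\ that $a^\mu(1)$ is a clique in the Willshaw graph. Nothing guarantees this. A neuron $i$ lands in $a^\mu(1)$ because it is connected to all of $a^\mu(0)$, not to all of $a^\mu(1)$; two such neurons need not be connected to each other. (Your earlier abandoned attempt to prove pairwise connectivity also contains a slip: $a^\mu(0)$ is contained in the support of $\xi^\mu$, not the other way round.) Likewise, the forward inclusion $a^\mu(1)\subseteq a^\mu(2)$ you invoke relies on the fixed-threshold proposition, which does not apply here since the threshold jumps from $k$ to at most $k'$.

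The paper's proof proceeds instead by a case distinction on exactly this point. If $a^\mu(1)$ \emph{is} a clique, then indeed $h_{(1)}(1)=k'$ and $a^\mu(t)=a^\mu(1)$ for all $t\ge1$: convergence in one step. If $a^\mu(1)$ is \emph{not} a clique, one shows that the dynamics enters a period-2 oscillation between $\tilde\xi^\mu(1)$ and $\tilde\xi^\mu(2)$ with $\tilde\xi^\mu(1)\neq\tilde\xi^\mu(2)$, so it does not converge at all. In either case the biconditional ``converges $\Leftrightarrow$ converges in one step'' holds. You need this second branch; your current write-up has no mechanism to handle non-convergence.
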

\begin{proof}
  Let us use the same notations as in the proof of Theorem~\ref{thm:convergence}. We denote by $h_{(1)}(t)$ the value of the threshold at step $t$.

  Let us discuss two cases:
  \begin{enumerate}
  \item In the first case it holds for all $i$ and $j$ that $ \tilde{\xi}_i^\mu(1) = 1$ and $\tilde{\xi}_j^\mu(1) = 1$ implies that  $J_{ij} = 1$. In other words:  All activated neurons after one iteration are connected one to another. In this case one can easily check that $$h_{(1)}(1) = \mathrm{card}(\{i, \tilde{\xi}_i^\mu(1) = 1\})$$
and thus we have for all $t \geq 1$ that $\tilde{\xi}^\mu(1) = \tilde{\xi}^\mu(t)$.
\item There are $i'$ and $j'$ such that  $\tilde{\xi}_{i'}^\mu(1) = 1$ and $\tilde{\xi}_{j'}^\mu(1) = 1$ but $ J_{{i'}{j'}} = 0$, i.e. there are activated neurons that are not interconnected.

\noindent
Note that by construction of $J$ we then cannot have that $\tilde{\xi}^\mu(1) = \xi^\mu$. We fix such a pair  $i'$ and $j'$. By construction of $J$, we have for  all $i$ and $j$ that $\xi^\mu_i = 1$ and $\xi^\mu_j = 1$ implies that $ J_{ij} = 1$. As a direct consequence, we obtain that
$$
h_{(1)}(0) = \mathrm{card}(\{i, \tilde{\xi}_i^\mu(0) = 1\})
$$
and therefore all neurons activated at step 0 are connected to all neurons activated at step 1 (note also that $\{i, \xi^\mu_i = 1\} \subsetneq \{i, \tilde{\xi}_i^\mu(1) = 1\}$). Thus we obtain
$$
h_{(1)}(1) = \mathrm{card}(\{i, \tilde{\xi}_i^\mu(1) = 1\}).
$$
Consequently, $\tilde{\xi}_{i'}^\mu(2) = 0$ and $\tilde{\xi}_{j'}^\mu(2) = 0$ which leads to $\tilde{\xi}^\mu(1) \not= \tilde{\xi}^\mu(2)$. We conclude that the neurons activated in $\tilde{\xi}^\mu(2)$ are those connected to all neurons in $\tilde{\xi}^\mu(1)$. In particular we obtain $\{i, \tilde{\xi}_i^\mu(0) = 1\} \subseteq \{i, \tilde{\xi}_i^\mu(2) = 1\}$.

\noindent
Similarly we have that $h_{(1)}(2) = \mathrm{card}(\{i, \tilde{\xi}_i^\mu(2) = 1\})$.

We then observe that there cannot be a neuron active at step 3 that is not active at step 1, as the neurons activated at step 3 are connected to all neurons activated at step 2 and thus to all neurons activated at step 0. We conclude that for all $t\geq 1$ we have that
$$
\tilde{\xi}^\mu(2t-1) = \tilde{\xi}^\mu(2t+1)  \qquad \mbox{and   }\tilde{\xi}^\mu(2t) = \tilde{\xi}^\mu(2t+2),
$$
together with $\tilde{\xi}^\mu(1) \not= \tilde{\xi}^\mu(2)$.
  \end{enumerate}
\end{proof}

\subsection{GB model}

Interestingly, the specific GB structure can be exploited in order to provide good performance and to ensure at the same time convergence of the dynamics. This is thanks to the previously mentioned SUM-OF-MAX rule (see Equation \eqref{eq:som}).
Recall the SUM-OF-MAX dynamic rule:
$$
T_{(a,k)}(\sigma) =  \Theta(s_{(a,k)}(\sigma)- h(a)),\ \mbox{where}\ h(a)=\max\{s(a,k), k=1,\ldots,l\}.
$$

This rule can be advantageously combined with a modification of the input when
retrieving a partially erased image. This modification consists in activating all
neurons in clusters where no neuron is active. Then we have trivially $h(a) = c$ for all $a$ and
this modification is such that the set of active neurons is non-increasing with
iterations of the dynamics.

Here is a rapid sketch of the proof of this result: to
be activated using the SUM-OF-MAX rule, a neuron has to be connected to at
least one activated neuron in each cluster. In particular it has to be connected to
an activated neuron in its own cluster. Due to the specific structure of the GB
model, the only connection a neuron may have with a neuron in its own cluster
is with itself. Therefore, to be activated, a neuron has to already be activated at
the previous step of the dynamics.

We refer to this algorithm as ``SOM'' in Figure~\ref{fig:performanceofnetworks}.

\subsection{Simulations}
\label{sec:simulations}

In order to compare the performance of the three above mentioned solutions, we run several simulations. We consider that the number of 1s in each message is $c$ for the Willshaw model.

We propose to use three different families of algorithms: a) fixed threshold ones where $h$ is determined a priori, b) varying threshold ones where $h$ can be modified at each iteration and c) exhaustive search where solutions are looked for using a brute-force approach. This last option allows us to compare the different models intrinsically, thus removing any bias from chosen retrieval dynamics.

For case a) we define $h$ as the number of 1s in the input pattern. This value appears to be optimal for most cases we simulated.
For case b) we use the winner-takes-all algorithm previously described in which we select $h$ so that the number of 1s in the obtained vector is minimum and at least $c$.
For case c) we use an exhaustive search of potential candidates and select randomly one of them.  Note that for Amari's model we select the clique (or one of the cliques) that achieve the maximum sum of inner edge weights.
Finally, for each case we also plot the obtained curves when using SUM-OF-MAX with the GB model for easier comparison of performance.

We depict the evolution of the error rate for a given problem as a function of the number of stored patterns. This measure is not totally fair as:
\begin{itemize}
\item A stored pattern with $c$ 1s using the Willshaw model or Amari's one made of $N$ neurons has entropy $\log_2\left(\binom{N}{c}\right)$ whereas with the GB model its entropy is lesser: $C\log_2\left(l\right)$.
\item The number of possible connections in a Willshaw model or Amari's one with $N$ neurons is larger than that using a GB model with the same number of neurons. Moreover in the Amari model each connection can take up to $M$ distinct values.
\end{itemize}

In order to account for these differences, we propose to depict also the evolution of the error rate as a function of the efficiency of the model, defined as the ratio between the entropy of the set of stored patterns and the number $C$ of bits required for straightforward encoding of the used synaptic weights. The latter value $C$ depends on the model parameters: for an Amari model made of $N$ neurons and storing $M$ patterns, it is equal to:
\[
C_{Amari} = \binom{N}{2} \log_2(M+1)\;.
\]
For the Willshaw model it becomes:
\[
C_{Willshaw} = \binom{N}{2}\;.
\]
For the GB model, it depends on the parameters $c$ and $l$ and becomes:
\[
C_{GB} = \binom{c}{2}l^2\;.
\]

The results are depicted in Figure~\ref{fig:performanceofnetworks}. Some remarks about these results:

\begin{figure}
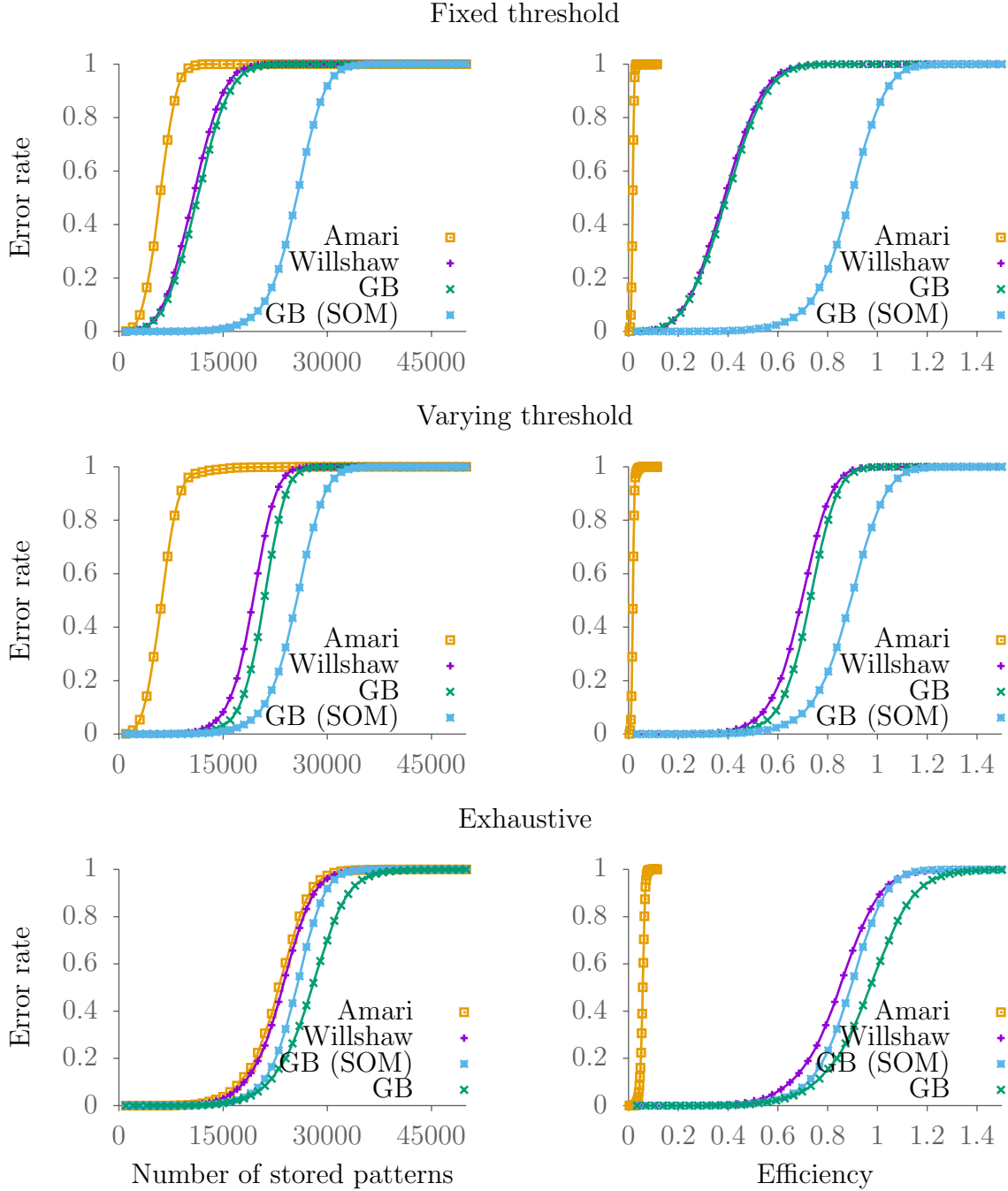

  \begin{centering}
    \begin{tabular}{cc}
      \multicolumn{2}{c}{Fixed threshold}\\
      \input{out-fixed-messages.tex}&\input{out-fixed.tex}\\
      \multicolumn{2}{c}{Varying threshold}\\
      \input{out-varying-messages.tex}&\input{out-varying.tex}\\
      \multicolumn{2}{c}{Exhaustive}\\
      \input{out-exhaustive-messages.tex}&\input{out-exhaustive.tex}
    \end{tabular}
  \end{centering}

  \caption{Comparison of performance of Amari, Willshaw and GB models (with proposed dynamics and SUM-OF-MAX (SOM)). For all simulated point, there are $N=2048$ neurons (grouped in $c=8$ clusters of $l=256$ neurons for the GB model), stored messages contain exactly $c=8$ 1s each and the objective is to retrieve a previously stored pattern when 4 out of the initial 8 1s in stored messages are missing. Each point is the average of 100.000 tests. Figures in first column depict the evolution of the error rate as a function of the number of stored patterns. Figures in second column depicts the evolution of the error rate as a function of efficiency. First line correspond to fixed threshold dynamics, second line to varying threshold strategies and third line to exhaustive ones.}
  \label{fig:performanceofnetworks}
\end{figure}

\begin{itemize}
\item No matter what algorithms are used, the performance of the methods clearly indicates that GB performs better than Willshaw that performs itself better than Amari's networks.
\item The only difference between Amari's networks and Willshaw's is the fact the former use weighted connections instead of binary ones. Our simulations clearly indicates that weights offer no gain in performance.
\item It appears clearly that fixed threshold algorithms perform worse than varying threshold ones.
\end{itemize}

%\newpage

 \bibliographystyle{abbrv}
\bibliography{LiteraturDatenbank}

\begin{thebibliography}{10}

\bibitem{griponc}
B.~K. Aliabadi, C.~Berrou, V.~Gripon, and X.~Jiang.
\newblock Storing sparse messages in networks of neural cliques.
\newblock {\em IEEE Transactions on Neural Networks and Learning Systems},
  25:980--989, 2014.

\bibitem{Bolle}
D.~Boll\'e and T.~Verbeiren.
\newblock Thermodynamics of fully connected {B}lume-{E}mery-{G}riffiths neural
  networks.
\newblock {\em J. Phys. A: Math. Gen}, 36(6):295--305, 2003.

\bibitem{Boutsikas2000}
M.~V. Boutsikas and M.~V. Koutras.
\newblock A bound for the distribution of the sum of discrete associated or
  negatively associated random variables.
\newblock {\em Ann. Appl. Probab.}, 10(4):1137--1150, 2000.

\bibitem{Bov98}
A.~Bovier.
\newblock Sharp upper bounds on perfect retrieval in the {H}opfield model.
\newblock {\em J. Appl. Probab.}, 36(3):941--950, 1999.

\bibitem{Burshtein}
D.~Burshtein.
\newblock Nondirect convergence radius and number of iterations of the
  {H}opfield associative memory.
\newblock {\em IEEE Trans. Inform. Theory}, 40(3):838--847, 1994.

\bibitem{Esary/Proschan/Malkup:1967}
J.~D. Esary, F.~Proschan, and D.~W. Walkup.
\newblock Association of random variables, with applications.
\newblock {\em Ann. Math. Statist.}, 38:1466--1474, 1967.

\bibitem{griponb}
V.~Gripon and C.~Berrou.
\newblock Sparse neural networks with large learning diversity.
\newblock {\em IEEE Transactions on Neural Networks}, 22(7):1087--1096, July
  2011.

\bibitem{HLV15}
J.~Heusel, M.~L{\"o}we, and F.~Vermet.
\newblock On the capacity of an associative memory model based on neural
  cliques.
\newblock {\em Statist. Probab. Lett.}, 106:256--261, 2015.

\bibitem{Hopfield1982}
J.~J. Hopfield.
\newblock Neural networks and physical systems with emergent collective
  computational abilities.
\newblock {\em Proc. Nat. Acad. Sci. U.S.A.}, 79(8):2554--2558, 1982.

\bibitem{Amari1989}
S.~ichi Amari.
\newblock Characteristics of sparsely encoded associative memory.
\newblock {\em Neural Networks}, 2(6):451 -- 457, 1989.

\bibitem{gripone}
H.~Jarollahi, V.~Gripon, N.~Onizawa, and W.~J. Gross.
\newblock Algorithm and architecture for a low-power content-addressable memory
  based on sparse-clustered networks.
\newblock {\em Transactions on Very Large Scale Integration Systems}, PP:1,
  October 2014.

\bibitem{gripond}
H.~Jarollahi, N.~Onizawa, V.~Gripon, and W.~J. Gross.
\newblock Algorithm and architecture of fully-parallel associative memories
  based on sparse clustered networks.
\newblock {\em Journal of Signal Processing Systems}, pages 1--13, 2014.

\bibitem{gripong}
H.~Jarollahi, N.~Onizawa, V.~Gripon, and W.~J. Gross.
\newblock Algorithm and architecture of fully-parallel associative memories
  based on sparse clustered networks.
\newblock {\em Journal of Signal Processing Systems}, pages 1--13, 2014.

\bibitem{griponf}
X.~Jiang, V.~Gripon, C.~Berrou, and M.~Rabbat.
\newblock Storing sequences in binary tournament-based neural networks.
\newblock {\em IEEE Transactions on Neural Networks and Learning Systems}, July
  2014.
\newblock Submitted to.

\bibitem{L98}
M.~L{\"o}we.
\newblock On the storage capacity of {H}opfield models with correlated
  patterns.
\newblock {\em Ann. Appl. Probab.}, 8(4):1216--1250, 1998.

\bibitem{Lo_biased}
M.~L{\"o}we.
\newblock On the storage capacity of the {H}opfield model with biased patterns.
\newblock {\em IEEE Trans. Inform. Theory}, 45(1):314--318, 1999.

\bibitem{LV_BEG}
M.~L{\"o}we and F.~Vermet.
\newblock The storage capacity of the {B}lume-{E}mery-{G}riffiths neural
  network.
\newblock {\em J. Phys. A}, 38(16):3483--3503, 2005.

\bibitem{LV07}
M.~L{\"o}we and F.~Vermet.
\newblock The capacity of {$q$}-state {P}otts neural networks with parallel
  retrieval dynamics.
\newblock {\em Statist. Probab. Lett.}, 77(14):1505--1514, 2007.

\bibitem{LV15}
M.~L{\"o}we and F.~Vermet.
\newblock Capacity of an associative memory model on random graph
  architectures.
\newblock {\em Bernoulli}, 21(3):1884--1910, 2015.

\bibitem{MPRV}
R.~J. McEliece, E.~C. Posner, E.~R. Rodemich, and S.~S. Venkatesh.
\newblock The capacity of the {H}opfield associative memory.
\newblock {\em IEEE Trans. Inform. Theory}, 33(4):461--482, 1987.

\bibitem{Okada1996}
M.~Okada.
\newblock Notions of associative memory and sparse coding.
\newblock {\em Neural Networks}, 9(8):1429 -- 1458, 1996.
\newblock Four Major Hypotheses in Neuroscience.

\bibitem{Palm1980}
G.~Palm.
\newblock On associative memory.
\newblock {\em Biological Cybernetics}, 36(1):19--31, 1980.

\bibitem{Palm2013}
G.~Palm.
\newblock Neural associative memories and sparse coding.
\newblock {\em Neural Networks}, 37(0):165 -- 171, 2013.
\newblock Twenty-fifth Anniversay Commemorative Issue.

\bibitem{Palm1996}
F.~Schwenker, F.~Sommer, and G.~Palm.
\newblock Iterative retrieval of sparsely coded associative memory patterns.
\newblock {\em Neural Networks}, 9(3):445 -- 455, 1996.

\bibitem{Willshaw}
D.~J. {Willshaw}, O.~P. {Buneman}, and H.~C. {Longuet-Higgins}.
\newblock {Non-Holographic Associative Memory}.
\newblock {\em Nature}, 222:960--962, June 1969.

\bibitem{griponh}
Z.~Yao, V.~Gripon, and M.~G. Rabbat.
\newblock A massively parallel associative memory based on sparse neural
  networks.
\newblock {\em Transactions on Parallel and Distributed Systems}, December
  2013.
\newblock Submitted to.

\end{thebibliography}
\end{document}